\def\distribute{}
\date{November 12, 2012}
\newcommand{\red}[1]{\textcolor{Black}{#1}}
\title{
    Intrinsic Invariants of Cross Caps
}
\author{%
  Masaru Hasegawa \and 
  Atsufumi Honda \and 
  Kosuke Naokawa \and 
  Masaaki Umehara \and 
  Kotaro Yamada
\thanks{%
  The second and third authors were partly supported by 
  the Grant-in-Aid for JSPS Fellows.
  The fourth and fifth authors  were 
  partially supported by Grant-in-Aid for 
  Scientific Research (A) No.~22244006, 
  and Scientific Research (B) No.~21340016,
  respectively, from the Japan Society for the Promotion of Science.}
}
\authorrunning{M.~Hasegawa et.\ al.}
\institute{%
  Masaru Hasegawa \at 
  Department of Mathematics,
  Faculty of Science,
  Saitama University,  Sakura-ku, Saitama 338-857, Japan\\
  \email{mhasegawa@mail.saitama-u.ac.jp}
\and
  Masaaki Umehara \at 
  Department of Mathematical and Computing Sciences,
  Tokyo Institute of Technology,
  2-12-1-W8-34, O-okayama Meguro-ku,
  Tokyo 152-8552, Japan\\
  \email{umehara@is.titech.ac.jp}
\and 
  Atsufumi Honda \and Kosuke Naokawa \and Kotaro Yamada
  \at 
  Department of Mathematics,
  Tokyo Institute of Technology,
  O-okayama, Meguro-ku, Tokyo 152-8551, Japan \\
  \email{10d00059@math.titech.ac.jp,
         naokawa1@is.titech.ac.jp,
         kotaro@math.titech.ac.jp}
}
\title{%
    Intrinsic Invariants of Cross Caps
}
\author{M.~Hasegawa}
\author{A.~Honda}
\author{K.~Naokawa}
\author{M.~Umehara}
\author{K.~Yamada}
\address[Masaru Hasegawa]{%
  Department of Mathematics,
  Faculty of Science,
  Saitama University,  Sakura-ku, Saitama 338-857, Japan}
\email{mhasegawa@mail.saitama-u.ac.jp}
\address[Masaaki Umehara]{
  Department of Mathematical and Computing Sciences,
  Tokyo Institute of Technology,
  2-12-1-W8-34, O-okayama Meguro-ku,
  Tokyo 152-8552 Japan}
\email{umehara@is.titech.ac.jp}
\address[Atsufumi Honda, Kosuke Naokawa and Kotaro Yamada]{%
  Department of Mathematics,
  Tokyo Institute of Technology,
  O-okayama, Meguro, Tokyo 152-8551, Japan%
}
\email{10d00059@math.titech.ac.jp}
\email{naokawa1@is.titech.ac.jp}
\email{kotaro@math.titech.ac.jp}
\subjclass[2010]{%
 Primary 57R45;   
 Secondary 53A05. 
}
\keywords{cross cap, curvature, isometric deformation}
\thanks{%
  The second and third authors were partly supported by 
  the Grant-in-Aid for JSPS Fellows.
  The fourth and fifth authors  were 
  partially supported by Grant-in-Aid for 
  Scientific Research (A) No.~22244006, 
  and Scientific Research (B) No.~21340016,
  respectively, from the Japan Society for the Promotion of Science.}
\theoremstyle{plain}
 \newtheorem{theorem}{Theorem}
 \newtheorem{proposition}[theorem]{Proposition}
 \newtheorem{corollary}[theorem]{Corollary}
\theoremstyle{definition}
 \newtheorem{definition}[theorem]{Definition}
\theoremstyle{remark}
 \newtheorem{remark}[theorem]{Remark}
 \newtheorem{example}{Example}
\newcommand{\vect}[1]{\boldsymbol{#1}}
\newcommand{\R}{\boldsymbol{R}}
\renewcommand{\phi}{\varphi}
\newcommand{\pmt}[1]{{\begin{pmatrix} #1  \end{pmatrix}}}
\newcommand{\can}{\operatorname{std}}
\newcommand{\ext}{\operatorname{ext}}
\begin{document}
\maketitle
\begin{abstract}
 It is classically known that generic smooth maps of $\R^2$
 into $\R^3$ admit only cross cap singularities. 
 This suggests that the class of cross caps might be an important 
 object in differential geometry.
 We show that the {\it standard cross cap\/} $f_{\can}(u,v)=(u,uv,v^2)$
 has non-trivial isometric deformations with infinite dimensional
 freedom.
 Since there are several geometric invariants for cross caps, 
 the existence of isometric deformations suggests that one can
 ask which invariants of cross caps are intrinsic.
 In this paper, we show  that there are three fundamental intrinsic 
 invariants for cross caps.
 The existence of extrinsic invariants is also shown.
\ifx\distribute\undefined
\keywords{%
 cross cap \and 
 curvature \and 
 isometric deformation 
}
\subclass{
  57R45  
\and
  53A05. 
}
\fi
\end{abstract}
\section{Introduction}
Let $U$ be a domain in $\R^2$ and $f:U\to \R^3$ a $C^\infty$-map.
A point $p$ $(\in U)$ is called a {\it singular point\/}
if the rank of the Jacobi matrix of $f$ at $p$ is less than $2$.
Consider such a map given by
\begin{equation}\label{eq:01}
   f_{\can}(u,v)=(u,uv,v^2),
\end{equation}
which has an isolated singular point at the origin $(0,0)$
and is called the {\it standard cross cap\/}
(see Figure~\ref{fig:deform}, left).
A singular point $p$ of a map $f:U\to \R^3$
is called a {\it cross cap\/} or a {\it Whitney umbrella\/}
if there exist local diffeomorphism $\phi$ on $\R^2$
and a local diffeomorphism $\Phi$ on $\R^3$ such that 
$\Phi\circ f=f_{\can}\circ \phi$.
Whitney proved that a $C^\infty$-map $f:U\to \R^3$ has 
a cross cap singularity at $p\in U$ if there exists 
a local coordinate system $(u,v)$ 
centered at $p$ such that 
\[
    f_{v}(0,0):=\frac{\partial f}{\partial v}(0,0)=0
\]
and three vectors
\[
    f_{u}(0,0):=\frac{\partial f}{\partial u}(0,0),\quad
    f_{uv}(0,0):=\frac{\partial^2 f}{\partial u\partial v}(0,0),\quad
    f_{vv}(0,0):=\frac{\partial^2 f}{\partial v^2}(0,0)
\]
are linearly independent.
By a rotation, a translation in $\R^3$ and a suitable orientation
preserving coordinate change of the domain $U\subset\R^2$,
we have the following Maclaurin expansion of $f$ at a cross cap 
singularity $(0,0)$ (cf.\ \cite{W} or \cite{FH})
\begin{equation}\label{eq:cross}
    f(u,v)=
        \left(
	   u,
	   uv+\sum_{i=3}^n \frac{b_i}{i!}v^i,
	   \sum_{r=2}^n \sum_{j=0}^r \frac{a_{j\,r-j}}{j!(r-j)!}u^j
	   v^{r-j}
	\right) +O(u,v)^{n+1},
\end{equation}
where $a_{02}$ never vanishes.
By orientation preserving coordinate changes $(u,v)\mapsto (-u,-v)$
and $(x,y,z)\mapsto (-x,y,-z)$, we may assume that
\begin{equation}\label{eq:a02}
    a_{02}>0,
\end{equation}
where $(x,y,z)$ is the usual Cartesian coordinate system of $\R^3$.
After this normalization \eqref{eq:a02}, one can easily verify that
all of the coefficients $a_{jk}$ and $b_i$ are
uniquely determined.
An oriented local coordinate system $(u,v)$ giving such a normal form
is called the {\it canonical coordinate system\/}
of $f$ at the cross cap singularity.
This unique expansion of a cross cap implies that
the coefficients $a_{jk}$ and $b_i$  can be considered as 
geometric invariants of the cross cap $f$.
A cross cap is called {\it non-degenerate\/} (resp.\ {\it degenerate})
if $a_{20}$ does not vanish (resp.\ does vanish).
On the other hand, a \red{real analytic} cross cap
is called {\it quadratic\/} if  $a_{jk}=0$ for 
$j+k\ge 3$ and $b_i=0$ for $i\ge 3$.
The standard cross cap is a typical example of a degenerate quadratic 
cross cap.

\begin{figure}
 \begin{center}
        \includegraphics[height=3.0cm]{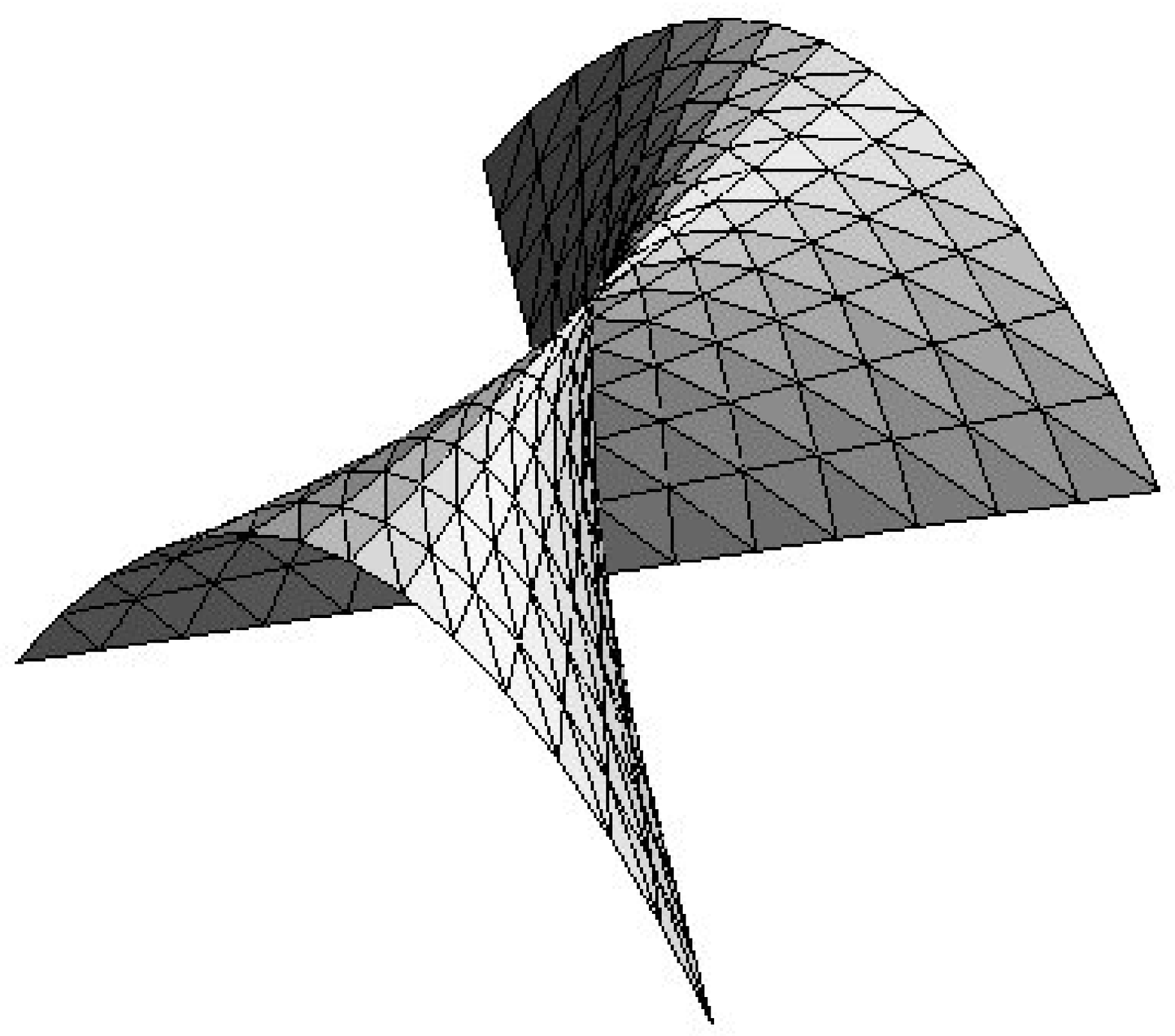}\qquad  
        \includegraphics[height=3.0cm]{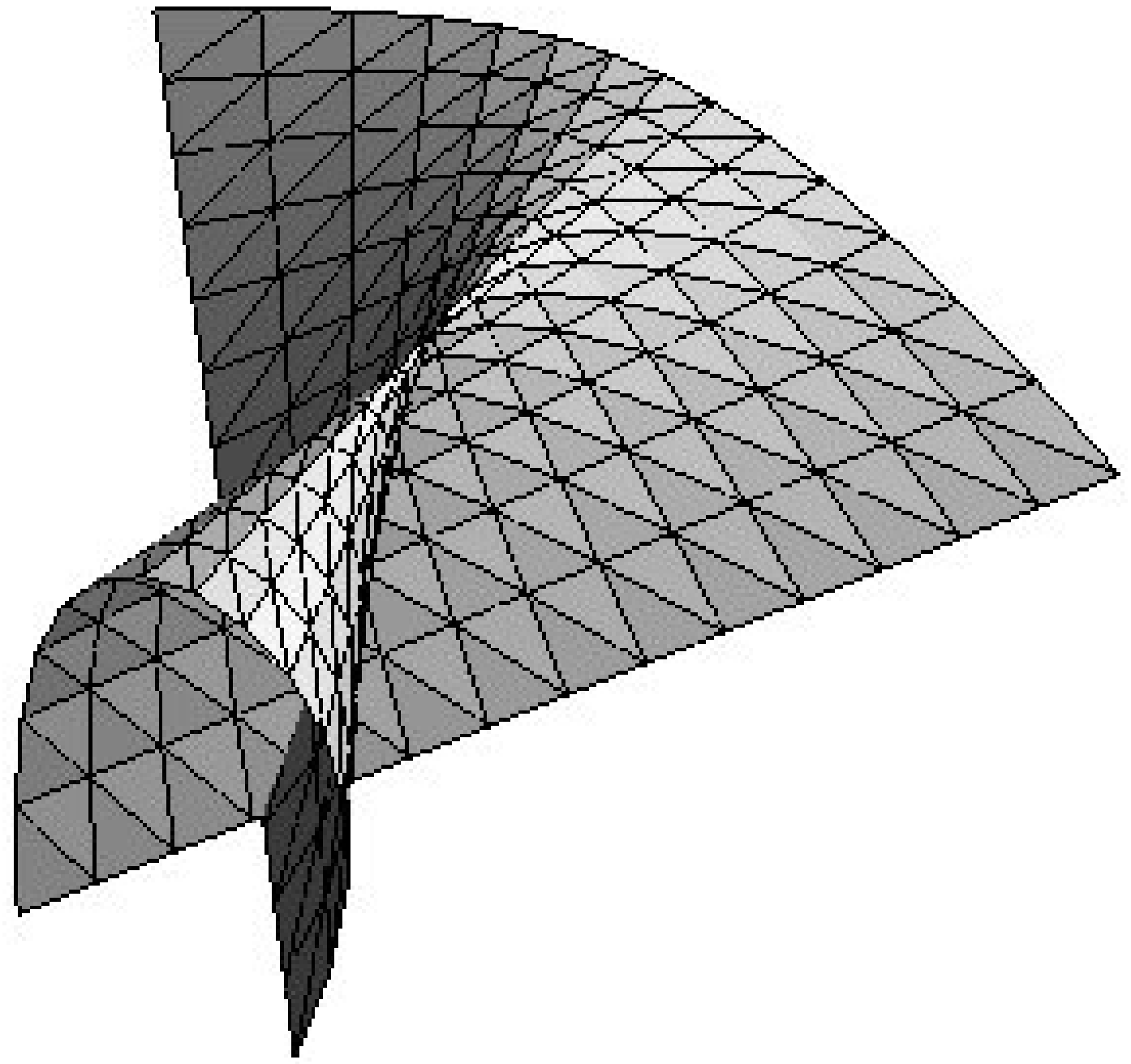}\qquad  
        \includegraphics[height=3.0cm]{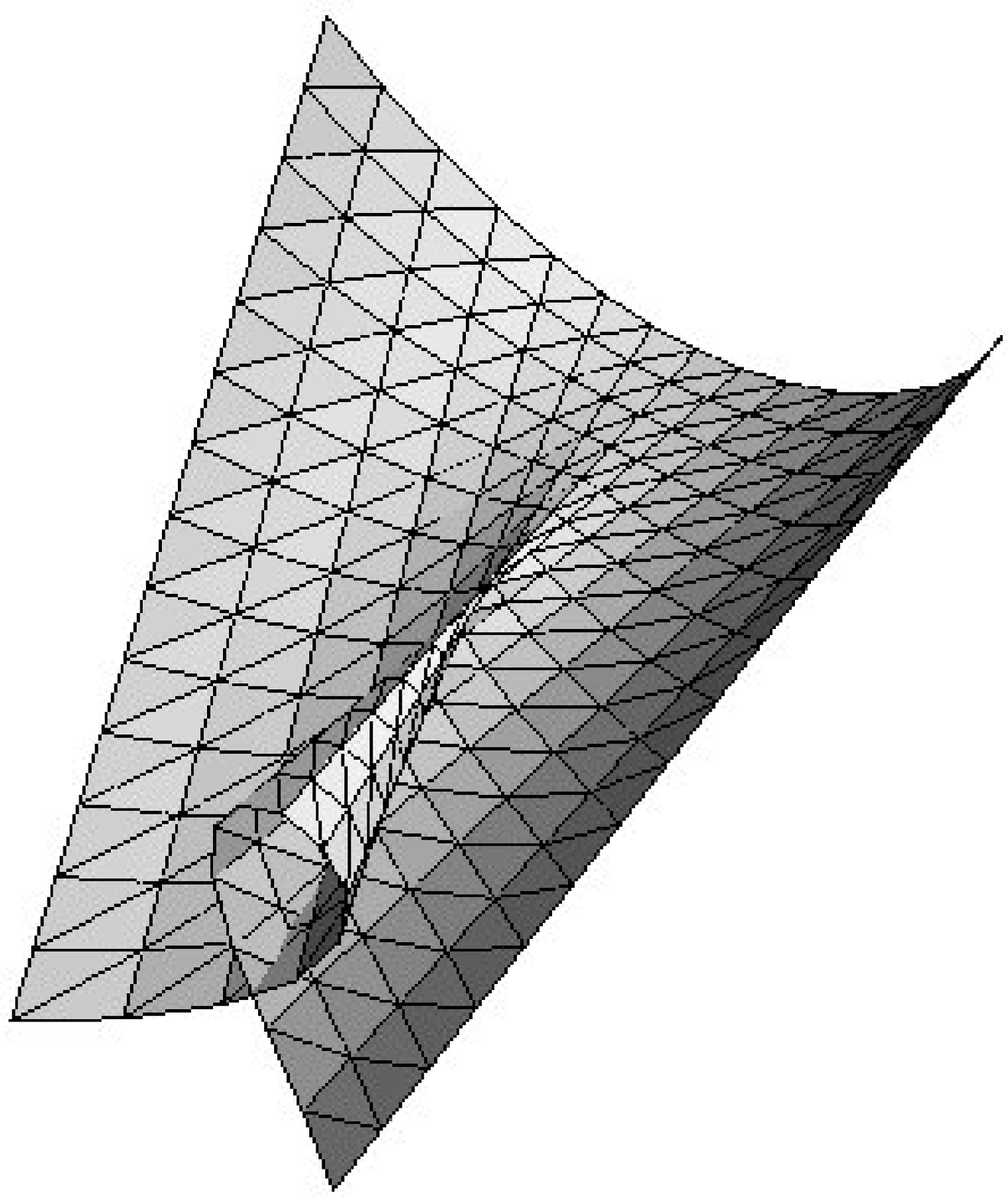}
  \caption{An isometric deformation of the standard cross cap}
\label{fig:deform}
\end{center}
\end{figure}

Let $f$ be a degenerate quadratic cross cap. 
\red{Using the classically known isometric deformations
of ruled surfaces,
we show that each 
degenerate quadratic cross cap induces a non-trivial family of 
isometric deformations with infinite dimensional freedom.} 
Moreover, using such a deformation, we show  that the invariants
$a_{03}$, $a_{12}$ and $b_3$ in \eqref{eq:cross} are extrinsic, 
namely, these invariants change according to the isometric deformation.
\red{It should be remarked that, using the same method, 
the existence of non-trivial isometric deformations are shown for other
typical singularities on surfaces,
i.e.\red{\,} cuspidal edges, swallowtails and cuspidal cross caps
(cf.\ Remark \ref{rmk:ruled}).}

The differential geometry of cross caps in $\R^3$ 
has been discussed 
by several authors 
(cf.\ \cite{BW},  \cite{FB}, \cite{FH}, \cite{FH2},
      \cite{GGS}, \cite{GS}, \cite{NT}, \cite{T} and \cite{W}).
However, the distinction between intrinsic and extrinsic
invariants has not been clearly discussed before. 
When $f:U\to \R^3$ is an immersion, 
it induces a Riemannian metric on $U$
(called the first fundamental form)
and we know that  `intrinsic' means that a given invariant 
is described in terms of this Riemannian structure on $U$.
Similarly, each cross cap induces a positive semidefinite 
symmetric tensor $ds^2$ as a pull-back of the ambient metric.
Then a given invariant of a cross cap is called {\it intrinsic\/}
if it can be described in terms of this 
positive semidefinite metric $ds^2$. 
In the case of cuspidal edges in $\R^3$, 
such an intrinsic invariant is defined as a `singular curvature'
along these singular points (cf.\ \cite{SUY1}).

We then show that $a_{02}$, $a_{20}$ and $a_{11}$ are
intrinsic invariants.
In fact, if $a_{20}$ is negative, then the Gaussian curvature 
of a given cross cap is negative and having no lower bound.
On the other hand, if $a_{20}$ is positive,
then the Gaussian curvature is not bounded from below nor from above.
Fukui and the first author \cite{FH} found an important concept of
the `focal conic' of a cross cap,
as a section of its caustic by the normal plane (see 
the explanation after \eqref{eq:parallel} and 
also \cite{FH2}).
They also showed that focal conics have the expression
\[
    y^2+2a_{11}yz - (a_{20}a_{02}-a_{11}^2)z^2 +a_{02}z = 0.
\]
The focal conic is a hyperbola (resp.\ an ellipse)
if and only if $a_{20}$ is positive (resp.\ negative).
Since we have seen that $a_{02}$, $a_{20}$ and $a_{11}$ are intrinsic,
we can say that focal conics live in the intrinsic geometry of cross
caps, although caustics themselves are extrinsic objects. 
It should be remarked that the Gauss-Bonnet type formula for 
closed surfaces which admit only cross cap singularities 
in $\R^3$ has no defection at each singular point
(cf.\ Kuiper \cite[p.\ 92]{K}).

\section{Isometric deformations of degenerate quadratic cross caps}
\label{sec:deform}

As mentioned in the introduction, a quadratic cross cap
can be expressed as
\[
    \left(u,uv,\frac12(a_{20}u^2+2a_{11}uv+a_{02}v^2)\right)
    \qquad (a_{02}>0),
\]
where $(u,v)$ is the canonical coordinate system.
\red{It should be remarked that
the set of self-intersections 
lies in a straight line (see Theorem \ref{thm:normal}).}

As defined in the introduction,
the cross cap $(0,0)$ is degenerate if
$a_{20}$ vanishes.
A degenerate quadratic cross cap
has the following expression
\begin{equation}\label{eq:f0}
    f_0(u,v):=
    \frac12\left(0,0,a_{02}v^2\right)+
    u\left(1,v,a_{11}v\right)
    \qquad (a_{02}>0).
\end{equation}
In particular, it is a ruled surface.
The first fundamental form 
\[
    ds^2:=E_0\,du^2+2F_0\,du\,dv+G_0\, dv^2
\]
of $f_0$ is given by
\begin{align*}
   E_0&:=(f_0)_u\cdot (f_0)_u=1+(1+a_{11}^2)v^2, \\
   F_0&:=(f_0)_u\cdot (f_0)_v=(1+a_{11}^2)uv+a_{02}a_{11}v^2, \\
   G_0&:=(f_0)_v\cdot (f_0)_v=(1+a_{11}^2)u^2+2 a_{02}a_{11}uv+a_{02}^2v^2,
\end{align*}
where the dot indicates the canonical inner product of $\R^3$.

\begin{definition}
 Let $U$ be a domain in $(\R^2;u,v)$ containing the origin,
 and let $f_i:U\to \R^3$ ($i=0,1$) be two $C^\infty$-maps having
 a cross cap singularity at $(0,0)$.
 If $f_0$ and $f_1$ satisfy
 \begin{gather*}
    (f_0)_u\cdot (f_0)_u=(f_1)_u\cdot (f_1)_u,\quad 
    (f_0)_u\cdot (f_0)_v=(f_1)_u\cdot (f_1)_v,\\
    (f_0)_v\cdot (f_0)_v=(f_1)_v\cdot (f_1)_v,
 \end{gather*}
 then we say that $f_0$ is {\it isometric\/} to $f_1$.
 On the other hand, let $f_t:U\to \R^3$ ($|t|<\epsilon$),
 be a smooth $1$-parameter family of $C^\infty$-maps having
 a cross cap singularity at $(0,0)$, where $\epsilon$ is 
 a positive constant.
 Then 
 $\{f_t\}_{|t|<\epsilon}$ is called an {\it isometric deformation\/}
 if each $f_t$ is isometric to $f_0$.
 An isometric deformation of $f_t$ is {\it non-trivial\/}
 if each $f_t$ is not congruent to $f_0$.
\end{definition} 

\red{It is classically known that ruled surfaces admit non-trivial
isometric deformations in general. 
As pointed out in Remark \ref{rmk:ruled},
several singularities (i.e.\ cross caps, cuspidal edges, swallowtails
and cuspidal cross caps) may admit isometric deformations as
ruled surfaces.
The following assertion gives a characterization
of the degenerate quadratic cross caps:}

\begin{theorem}\label{thm:deform}
 Let $c(s)$ $(|s|<\pi/2)$ be a regular curve in the unit sphere 
 $S^2(\subset \R^3)$ with arc length parameter. 
 We set
 \begin{equation}\label{eq:xi}
     \xi(v):=\sqrt{1+(1+a_{11}^2)v^2}\, \hat c(v),\qquad 
     \hat c(v):=c\left(\arctan\bigl(v\sqrt{1+a_{11}^2}\bigr)\right),
 \end{equation}
 for each $v\in \R$, and
 \begin{equation*}
    \gamma(v):=\frac{a_{02}}{{1+a_{11}^2}}
     \int_0^v t B(t)dt,\qquad
     B(v):=a_{11}\xi'(v)+\xi(v)\times \xi'(v),
 \end{equation*}
 where the prime means the derivative with respect to $v$
 and $\times$ denotes the vector product in $\R^3$.
 Then a ruled surface $f_c:\R^2\to \R^3$ defined by
 \[
    f_c(u,v):=\gamma(v)+u \xi(v)
 \]
 has a cross cap singularity 
 at the origin such that $f_c$ is isometric to a degenerate
 quadratic cross cap $f_0$.
 Moreover, let $c_i(s)$ $(|s|<\pi/2\,;~i=1,2)$ be 
 two regular curves in $S^2$ with arc length parameter. 
 Then $f_{c_1}$ is congruent to $f_{c_2}$ 
 if and only if $c_1$ is congruent to $c_2$ in $S^2$.
 \red{In this correspondence 
$c \mapsto f_c$ between spherical curves and cross caps,
 the initial degenerate quadratic cross cap corresponds to 
the geodesic in $S^2$.} 
More precisely,
 $f_c$ is congruent to $f_0$ as in \eqref{eq:f0}
 if and only if  $c(s)$ is a geodesic in $S^2$.
\end{theorem}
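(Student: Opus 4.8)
The plan is to verify the isometry by a direct first-fundamental-form computation, confirm the cross cap singularity via Whitney's criterion, establish the congruence correspondence through rotational equivariance together with an invariant reconstruction of $c$ from $f_c$, and finally deduce the geodesic characterization as a corollary. For the isometry I would check that $f_c$ and $f_0$ share the same first fundamental form in the common coordinates $(u,v)$. Writing $\lambda(v):=\sqrt{1+(1+a_{11}^2)v^2}$, the condition $|\hat c|=1$ gives $|\xi|^2=\lambda^2=E_0$ at once. Differentiating $\xi=\lambda\hat c$ and using $\hat c\cdot\hat c'=0$ and $|c'|=1$ yields the two identities $\xi\cdot\xi'=(1+a_{11}^2)v$ and $|\xi'|^2=1+a_{11}^2=|\xi_0'|^2$, where $\xi_0=(1,v,a_{11}v)$ is the ruling of $f_0$. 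Since $\gamma'=\frac{a_{02}v}{1+a_{11}^2}B$ with $B=a_{11}\xi'+\xi\times\xi'$, the orthogonality relations $(\xi\times\xi')\cdot\xi=(\xi\times\xi')\cdot\xi'=0$ collapse the remaining products to $\gamma'\cdot\xi=a_{02}a_{11}v^2$ and $\gamma'\cdot\xi'=a_{02}a_{11}v$, while $|\xi\times\xi'|^2=|\xi|^2|\xi'|^2-(\xi\cdot\xi')^2=1+a_{11}^2$ gives $|\gamma'|^2=a_{02}^2v^2$. For any ruled surface $E,F,G$ are polynomials in $u$ whose coefficients are exactly these inner products, so comparing them term by term with the stated $E_0,F_0,G_0$ shows $f_c$ is isometric to $f_0$.

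Next I would confirm the cross cap at the origin. Since $\gamma'(0)=0$ we have $(f_c)_v(0,0)=0$, and evaluating the higher derivatives gives $(f_c)_u(0,0)=\xi(0)=c(0)$, $(f_c)_{uv}(0,0)=\xi'(0)=\sqrt{1+a_{11}^2}\,c'(0)$, and $(f_c)_{vv}(0,0)=\gamma''(0)=\frac{a_{02}}{\sqrt{1+a_{11}^2}}\bigl(a_{11}c'(0)+c(0)\times c'(0)\bigr)$. Because $|c|=1$ forces $c\perp c'$ and $|c'|=1$, the triple $c(0),c'(0),c(0)\times c'(0)$ is an orthonormal frame, and in it the three vectors above have a unipotent coordinate matrix; hence they are linearly independent and Whitney's criterion is met.

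For the congruence statement I argue in both directions. If $c_2=Ac_1$ with $A\in SO(3)$, then $\hat c_2=A\hat c_1$, $\xi_2=A\xi_1$, and $A(\xi_1\times\xi_1')=(A\xi_1)\times(A\xi_1')$ gives $B_2=AB_1$, hence $\gamma_2=A\gamma_1$ and $f_{c_2}=Af_{c_1}$, so the surfaces are congruent. For the converse, the key point is that $c$ is recoverable from $f_c$ in a rotation-invariant way: the rulings are exactly the straight lines lying on the surface, so the unit ruling-direction field $\hat c(v)=\xi(v)/|\xi(v)|$ is determined by $f_c(U)$ as a subset of $\R^3$, its image is the trace of $c$ in $S^2$, and reparametrizing that trace by arc length from the distinguished ruling through the cross cap returns $c$ up to an isometry of $S^2$. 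A congruence $f_{c_2}=T\circ f_{c_1}\circ\phi$ carries rulings to rulings and cross cap to cross cap, hence sends the trace of $\hat c_1$ to $A$ times the trace of $\hat c_2$ with $A=dT$; as $A$ preserves spherical arc length, $c_1$ and $c_2$ are congruent in $S^2$. I expect this reconstruction step—pinning down how the domain reparametrization $\phi$ respects the ruled structure and the arc-length normalization of $c$—to be the main obstacle.

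Finally, the geodesic characterization drops out. Taking $c=c_0(s):=\bigl(\cos s,\frac{\sin s}{\sqrt{1+a_{11}^2}},\frac{a_{11}\sin s}{\sqrt{1+a_{11}^2}}\bigr)$ gives $\xi=\xi_0=(1,v,a_{11}v)$, so $\xi_0\times\xi_0'=(0,-a_{11},1)$, $B\equiv(0,0,1+a_{11}^2)$, and $\gamma=\frac12(0,0,a_{02}v^2)=\gamma_0$, whence $f_{c_0}=f_0$; moreover $c_0$ is a great circle. Since isometries of $S^2$ carry geodesics to geodesics and any two great circles are congruent, the congruence criterion gives that $f_c$ is congruent to $f_0=f_{c_0}$ precisely when $c$ is congruent to $c_0$ in $S^2$, that is, precisely when $c$ is a geodesic.
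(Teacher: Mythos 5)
Your isometry computation is essentially the paper's own argument (the same inner-product identities for $\xi$, $\xi'$ and $\xi\times\xi'$, followed by a term-by-term comparison with $E_0,F_0,G_0$), and your explicit verification of Whitney's criterion at the origin is a worthwhile addition that the paper leaves implicit. Your forward congruence direction by equivariance ($c_2=Ac_1\Rightarrow f_{c_2}=Af_{c_1}$) is correct for $A\in SO(3)$ and genuinely differs from the paper; note only that for a reflection $A$ one has $A(\xi_1\times\xi_1')=-(A\xi_1)\times(A\xi_1')$, so $B_2\ne AB_1$ and that case would need separate treatment (an orientation subtlety the paper also elides).

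The genuine gap is in your converse direction. The claim ``the rulings are exactly the straight lines lying on the surface'' is false for the very surfaces at issue: for the degenerate quadratic cross cap $f_0$ of \eqref{eq:f0} (e.g.\ the standard cross cap $(u,uv,v^2)$), the self-intersection locus is a straight ray contained in the image which is not a ruling (cf.\ Theorem~\ref{thm:normal}), and this quadratic case is precisely the one you must feed into the congruence criterion to obtain the final geodesic characterization. Worse, even the corrected statement (every line in the surface is a ruling or lies in the self-intersection set) is not free: a line lying on the surface is necessarily an asymptotic curve, so classifying the lines requires knowing the asymptotic directions, i.e.\ the second fundamental form --- which you never compute. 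There is also an unresolved ambiguity at the end of your reconstruction: recovering only the unoriented, unparametrized trace of $\hat c$ yields $c_1$ congruent to $c_2$ \emph{or to its reverse} $s\mapsto c_2(-s)$, and a spherical curve need not be congruent to its reverse (the geodesic curvatures are $\kappa(s)$ and $-\kappa(-s)$). The paper's route closes both directions at once and avoids all of this: it computes the second fundamental form \eqref{eq:LM}, in which $L=0$, $M$ is independent of $c$, and $N$ involves $c$ only through its geodesic curvature $\kappa$; then by Bonnet's fundamental theorem (applied on the regular set and extended by continuity) $f_{c_1}$ and $f_{c_2}$ are congruent iff $\kappa_1=\kappa_2$, which holds iff $c_1$ and $c_2$ are congruent in $S^2$, and the geodesic case $\kappa\equiv 0$ gives $f_c\cong f_0$. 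Finally, observe that congruence here means congruence of \emph{maps}, $f_{c_2}=T\circ f_{c_1}$ in the same parameters; under that reading, differentiating in $u$ gives $\xi_2=A\xi_1$ immediately, so your entire line-reconstruction apparatus is either unnecessary (no reparametrization allowed) or insufficient as it stands (reparametrization allowed).
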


\begin{proof}
 By \eqref{eq:xi}, we have that 
 \begin{align}
           \xi(v)\cdot \xi(v)&=1+(1+a^2_{11})v^2, \\ 
     {\xi'}(v)\cdot {\xi'}(v)&=1+a^2_{11}, \label{eq:xi1}\\
      {\xi}(v)\cdot {\xi'}(v)&=(1+a^2_{11})v, \label{eq:xi2}
 \end{align}
 where we used the fact that $\hat c'(v)$ is orthogonal to 
 $\hat c(v)$. 
 Since $\xi(v)\times \xi'(v)$ is orthogonal to $\xi(v)$ and $\xi'(v)$, 
 the equations \eqref{eq:xi1} and \eqref{eq:xi2} yield that
 \begin{equation*}
  \xi(v)\cdot B(v)=a_{11}(1+a^2_{11})v,\qquad 
   \xi'(v)\cdot B(v)=a_{11}(1+a^2_{11}).
 \end{equation*}
 Finally, we have
 \[
    B\cdot B=|\xi\times \xi'|^2+a^2_{11} |\xi'|^2
    =|\xi|^2|\xi'|^2-(\xi\cdot \xi')^2+a^2_{11} |\xi'|^2
    =(1+a^2_{11})^2.
 \]
From now on,
we denote $f:=f_c$
for the sake of simplicity. 
 One can  prove that $f_u\cdot f_u$,\,\, $f_u\cdot f_v$ 
 and $f_v\cdot f_v$ coincide with 
 $E_0$, $F_0$ and $G_0$ respectively, using the above relations. 

 The unit normal vector field $\nu(u,v)$ is given by
 \[
    \nu(u,v)=
        \frac{1}{\delta}
          \biggl(-
           \left(a_{02}v 
                 \sqrt{1+(1+a^2_{11})v^2}
           \right) \vect{e}(v)
           +
         \bigl((1+a^2_{11})u+a_{02}a_{11}v\bigr) \vect{n}(v)\biggr),
 \]
 where 
 \[
    \delta:=\sqrt{1+a^2_{11}}
    \sqrt{(1+a^2_{11})u^2+2a_{02}a_{11}uv+a^2_{02}v^2(1+v^2)},
 \]
 $\vect{e}=dc/ds$ and $\vect{n}=c\times \vect{e}$.  
 By a straightforward calculation, 
 the second fundamental form  $L\,du^2+2M\,du\,dv+N\,dv^2$ 
 of $f$ is given by 
 \begin{equation}\label{eq:LM}
 \begin{aligned}
    &L:=0,\quad  
     M:=-\frac{a_{02}\sqrt{1+a^2_{11}}v}{\delta}, \\
    &N:=\frac{a_{02}\sqrt{1+a^2_{11}}}{\delta}u+
\frac{\delta \kappa(v)}{(1+(1+a^2_{11})v^2)^{3/2}},
 \end{aligned}
 \end{equation}
 where $\kappa(s)$ is the geodesic curvature of $c(s)$.
 Let $c_i(s)$ $(|s|<\pi/2;\,\, i=1,2)$ be two regular curves in 
 $S^2$ with arc length parameter, 
 and $\kappa_i(s)$ the geodesic curvature function of $c_i(s)$. 
 Then $f_{c_1}$ is congruent to $f_{c_2}$ 
 if $c_1$ is congruent to $c_2$ in $S^2$,
 since \eqref{eq:LM} implies that
 the second fundamental form of $f_{c_1}$
 coincides with that of $f_{c_2}$ if and only if $\kappa_1$ coincides 
 with $\kappa_2$.
 Finally, as seen in the following corollary, 
 degenerate quadratic cross caps correspond to the
 great circles, so we get the assertion.
\end{proof}

\begin{example}
Take a constant $\kappa$ and set
\[
    c_\kappa(s):=\frac{1}{\mu^2}
    \biggl(
       \kappa^2+\cos(\mu s),
       \mu \sin(\mu s),
       \kappa\bigl(1-\cos(\mu s)\bigr)
    \biggr)\quad
    \left(|s|<\frac{\pi}2,~ \mu:=\sqrt{1+\kappa^2}\right),
\]
which gives a circle in $S^2$ with arc length parameter and
of constant geodesic curvature $\kappa$.
Then it produces a deformation of the standard cross cap, 
where $c_0$ corresponds to $f_{\can}$ as in \eqref{eq:01}.
Figure~\ref{fig:deform} indicates the cross caps corresponding to 
$\kappa=0$, $1$ and $3$, respectively.
\end{example}

\begin{figure}[htb]
 \begin{center}
  \begin{tabular}{ccc}
        \includegraphics[height=3cm]{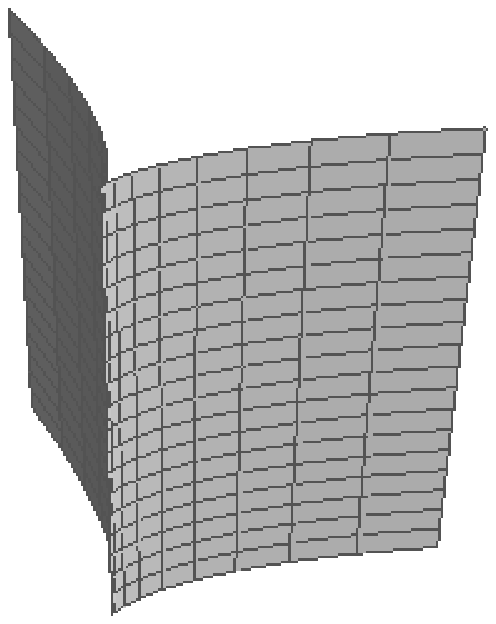}&
        \includegraphics[height=3cm]{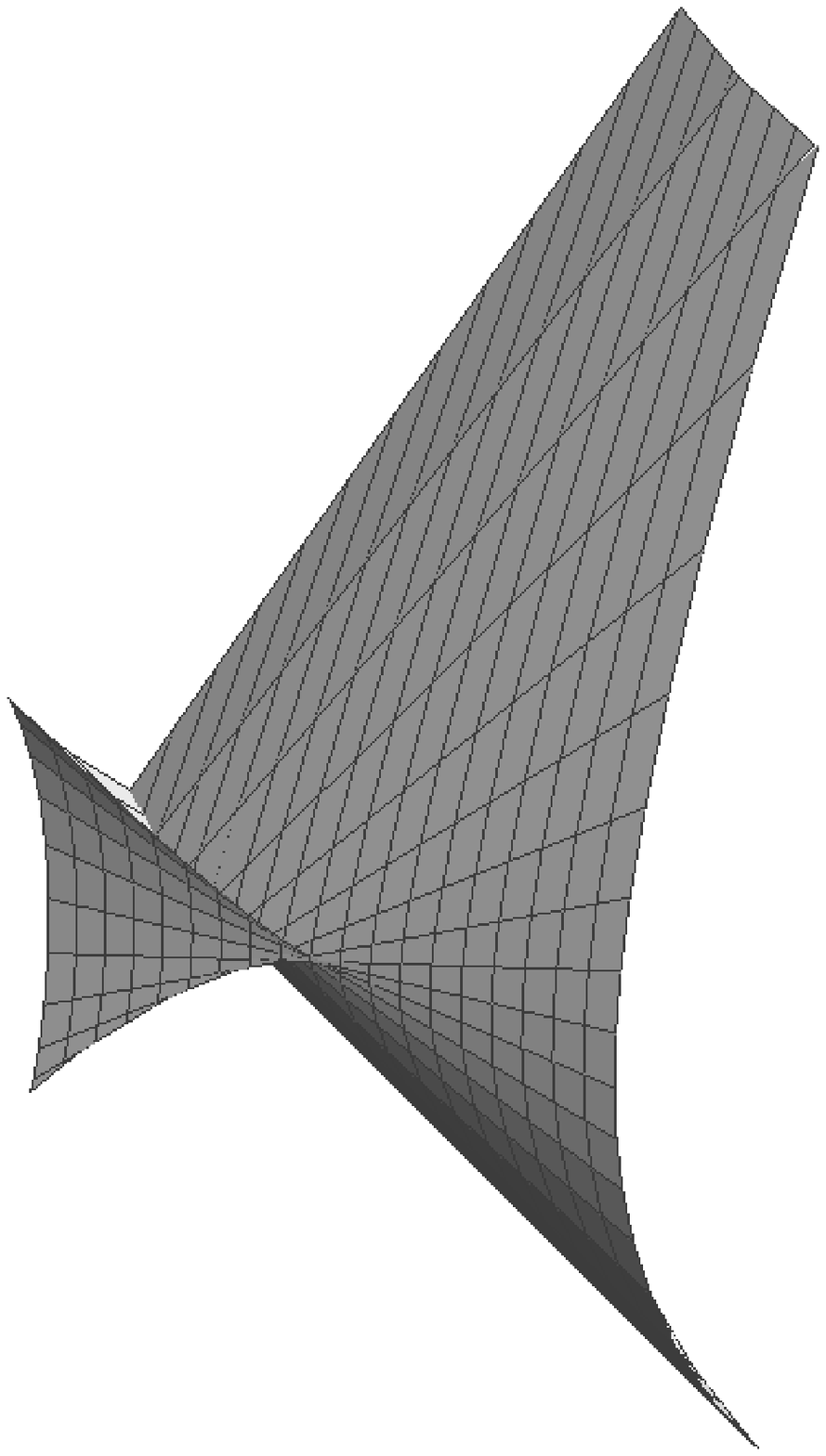}&
        \includegraphics[height=2.3cm]{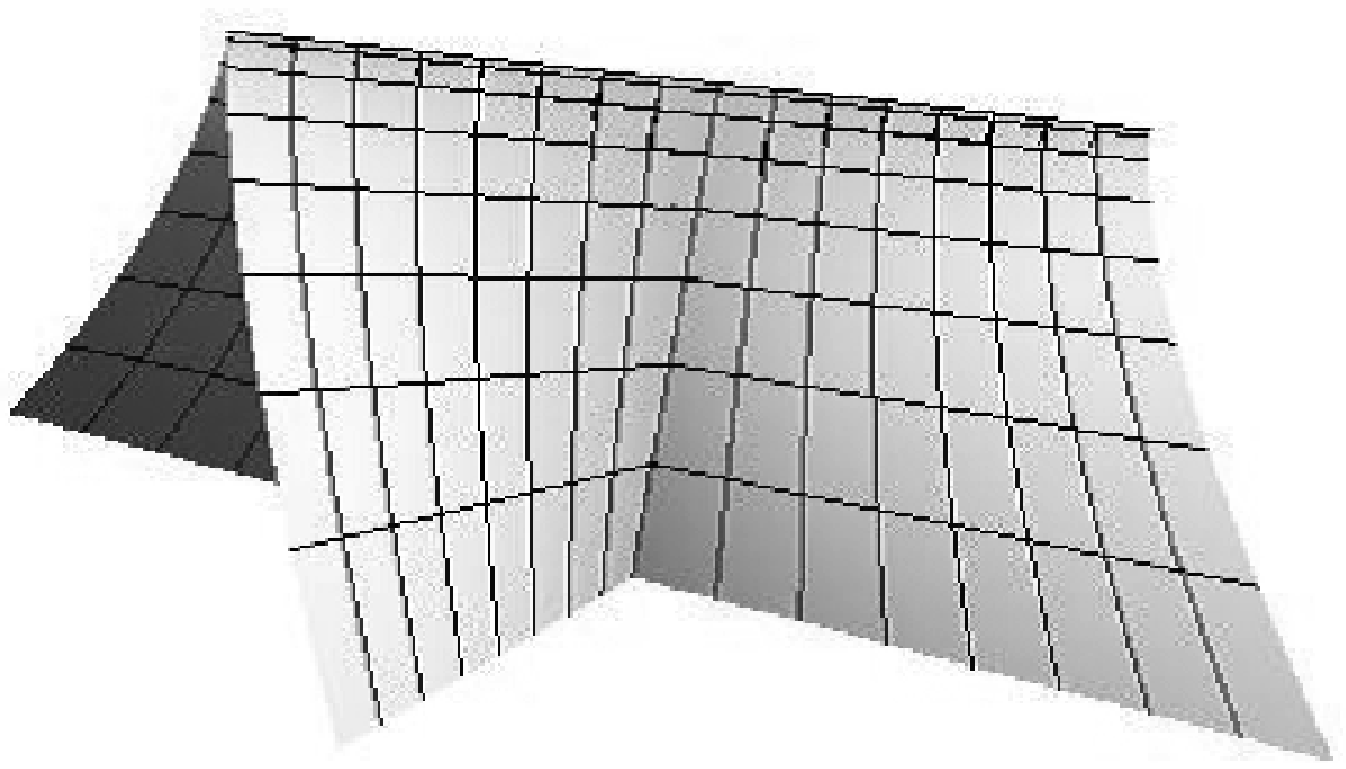}
  \end{tabular}
 \end{center}
  \caption{The cuspidal edge, the swallowtail and 
           the cross cap}
  \label{fig:ruled3}
\end{figure}
\red{
\begin{remark}[%
 Isometric deformations of ruled surfaces with singularities]
\label{rmk:ruled}
 Let $\gamma(t)$ be a curve in $\R^3$ defined near $t=0$,
 and $\xi(t)$ a vector field along the curve $\gamma$
 such that $\xi$ does not vanish and $\xi'(0)\ne 0$.
 Then the ruled surface $f(u,v):=\gamma(v)+u \xi(v)$
 has non-trivial isometric deformations as follows:
 By the coordinate change
 $(u,v)\mapsto (u/|\xi(v)|,v)$,
 we may assume that $|\xi(v)|=1$.
 Since $\xi'(0)\ne 0$, we may also  assume that $|\xi'(v)|=1$.
 Then $(\xi(v),\xi'(v),\xi(v)\times \xi'(v))$
 forms an orthonormal frame field, and the derivative of
 $\gamma(v)$ has the following expression
 \begin{equation*}
  \gamma'(v)=a(v)\xi(v)+b(v)\xi'(v)+c(v)\bigl(\xi(v)\times \xi'(v)\bigr).
 \end{equation*}
 Let $\tilde\xi(t)$ be an arbitrarily given spherical curve
 with arc length parameter, and 
 let $\tilde \gamma$ be the curve whose derivative is given as
 \[
    \tilde \gamma'(v)=a(v)\tilde\xi(v)+b(v)
     \tilde\xi'(v)+c(v)\bigl(\tilde\xi(v)\times \tilde\xi'(v)\bigr).
 \]
 Then 
 $\tilde f(u,v):=\tilde \gamma(v)+u \tilde \xi(v)$
 has the same first fundamental form as $f(u,v)$.
 By computing the second fundamental form,
 one can easily verify that $f$ and $\tilde f$
 are congruent if $\xi$ and $\tilde \xi$ are as well.
 This implies that several well-known singularities
 on surfaces admit isometric deformations like as
 in the case of cross caps
 in Theorem~\ref{thm:deform}. 
 For example,
\begin{it}
\renewcommand{\theenumi}{\rm{(\arabic{enumi})}}
\renewcommand{\labelenumi}{\rm{(\arabic{enumi})}}
\begin{enumerate}
 \item[{\rm (i)}] $(0,0)$ is a cross cap if and only if
       $\gamma'(0)=0$
       and $\det(\gamma''(0),\xi(0),\xi'(0))\ne 0$.
 \item[{\rm (ii)}]
  $f$ is a developable map {\rm (}i.e.\ the Gaussian curvature of $f$
       vanishes identically at each regular point of $f${\rm)}
       having a cuspidal edge 
       {\rm(}cf.\ Figure \ref{fig:ruled3}, left{\rm)} at
       $(0,0)$  if $b(v),\,\, c(v)$ vanish identically
       and $a(0)\ne 0$ hold
       {\rm(}cf.\ \cite[Fact (1)]{FSUY}{\rm)}.
 \item[{\rm (iii)}] $f$ is a developable map having a swallowtail 
       {\rm(}cf.\ Figure \ref{fig:ruled3}, center{\rm)}
       at $(0,0)$  if $b(v),c(v)$ vanish identically
       and $a(0)=0$, $a'(0)\ne 0$ hold
       {\rm(}cf.\ \cite[Fact (2)]{FSUY}{\rm)}.
 \item[{\rm (iv)}] We set $\nu=\xi\times \xi'$.
       Then $f$ is a developable map
       having a cuspidal cross cap 
       {\rm(}cf.\ Figure \ref{fig:ruled3}, right{\rm)}
       at $(0,0)$ if 
       $b(v)$ and $c(v)$ vanish identically, and
       \[
          \det(\xi(0),\nu(0),\nu'(0))=0,\quad
          a(0)\ne 0,\quad
          \det(\xi(0),\nu(0),\nu''(0))\ne  0
       \]
hold.
       This criterion can be proved 
       by applying \cite[Corollary 1.5]{FSUY},
       using the fact that
       $\nu$ gives the unit normal vector field
       when $f$ is a developable map.
\end{enumerate}%
\end{it}%
 One can choose $\gamma$ and $\xi$ so that
 they satisfy each of the above criteria. 
 This implies that cross caps, cuspidal edges, swallowtails,
 and cuspidal cross caps actually admit non-trivial
 isometric deformations
 in a certain class of ruled surfaces.
\end{remark}
}

Using the existence of non-trivial isometric deformations of 
degenerate quadratic cross caps, we can prove the following assertion.

\begin{theorem}\label{thm:extrinsic}
 The three invariants $a_{12}$, $a_{03}$ and $b_3$ are extrinsic.
\end{theorem}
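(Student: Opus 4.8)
The plan is to establish extrinsicness by producing, for a suitably chosen degenerate quadratic cross cap, a metric-preserving deformation along which each of $a_{12}$, $a_{03}$ and $b_3$ genuinely moves. By definition an invariant fails to be intrinsic as soon as two cross caps sharing the same first fundamental form $ds^2$ disagree on it, so it suffices to exhibit a single non-trivial isometric deformation $f_t$ of some $f_0$ along which these three coefficients are non-constant. Theorem~\ref{thm:deform} hands us exactly such deformations: the spherical curve $c$ may be varied freely while $ds^2$ stays equal to that of $f_0$, and $f_0$ as in \eqref{eq:f0} is recovered precisely from a geodesic. Concretely I would use the one-parameter family $c_\kappa$ of circles of constant geodesic curvature $\kappa$ from the Example, whose member $c_0$ is a great circle, so that $\{f_{c_\kappa}\}$ is a smooth isometric deformation of $f_0=f_{c_0}$.

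The core of the argument is to compute the canonical coordinate expansion \eqref{eq:cross} of $f_{c_\kappa}$ and to read off the three cubic coefficients as functions of $\kappa$. First I would record the low-order jets of $\xi$ and $\gamma$ at $v=0$ using the spherical Frenet relations $c'=\vect e$, $\vect e'=-c+\kappa\vect n$, $\vect n'=-\kappa\vect e$. Since $\gamma(0)=\gamma'(0)=0$ the singular point sits at the origin, and a short computation gives $\xi''(0)$ and $\gamma'''(0)$ proportional to $\kappa(0)$. This already reflects the geometry: when $c$ is a geodesic ($\kappa\equiv 0$) the map is the quadratic cross cap $f_0$ with vanishing cubic part, so every cubic invariant is forced to be a multiple of $\kappa(0)$. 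Next I would put the $2$-jet into the normal form of \eqref{eq:cross} by a rigid motion: a rotation about the $x$-axis through the angle $\theta$ with $\tan\theta=a_{11}$ sends the quadratic part exactly to $(u,\,uv,\,a_{11}uv+\frac{a_{02}}{2}v^2)$. Then I would apply the unique orientation-preserving reparametrization $(u,v)\mapsto(\tilde u,\tilde v)$, tangent to the identity, that removes the remaining non-canonical cubic monomials from the $y$-component. Expanding $\tilde v=v+\alpha v^2+\cdots$ and matching the $y$-component to $\tilde u\tilde v+\frac{b_3}{6}\tilde v^3$ fixes $\alpha$ and $b_3$; feeding the same $\tilde v$ into the $z$-component then delivers $a_{12}$ and $a_{03}$.

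Carrying this out, I expect the three invariants to come out as nonzero multiples of $\kappa(0)$, of the shape $b_3=-2a_{02}\sqrt{1+a_{11}^2}\,\kappa(0)$, $a_{12}=(1+a_{11}^2)^{3/2}\kappa(0)$ and $a_{03}=3a_{02}a_{11}\sqrt{1+a_{11}^2}\,\kappa(0)$. Fixing a degenerate quadratic cross cap with $a_{02}>0$ and $a_{11}\ne 0$, all three are then strictly monotone in $\kappa(0)$ and vanish for the undeformed $f_0$; hence none of them is a function of $ds^2$ alone. This is precisely the assertion that $a_{12}$, $a_{03}$ and $b_3$ are extrinsic. (The same computation with $a_{11}=0$ already exhibits $b_3$ and $a_{12}$ as extrinsic along the standard cross cap of Figure~\ref{fig:deform}, while the choice $a_{11}\ne 0$ is what additionally detects $a_{03}$.)

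The step I expect to be genuinely delicate is the third-order normalization. The reparametrization to canonical coordinates is constrained by the requirement that the $x$- and $y$-components become exactly $u$ and $uv+\sum_i (b_i/i!)v^i$, and one must track how this forced change of the $v$-variable feeds back into the $z$-component without producing a spurious cancellation of the $\kappa(0)$-terms. Keeping the bookkeeping honest, in particular verifying that the coefficient $\alpha$ of the quadratic correction to $\tilde v$ contributes to $a_{12}$ and $a_{03}$ with the correct sign, is where the real care is needed; the rest is the routine jet computation sketched above.
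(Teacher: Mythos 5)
Your proposal is correct and is essentially the paper's own proof: it uses the isometric family $f_c$ of Theorem~\ref{thm:deform}, computes the third-order jet via the spherical Frenet equations, normalizes to canonical coordinates up to cubic order by a quadratic reparametrization of $v$, and arrives at exactly the formulas \eqref{eq:extinv}; the only (cosmetic) difference is that the paper builds your $x$-axis rotation into the initial conditions $c(0)=(1,0,0)$, $c'(0)=\tfrac{1}{\sqrt{1+a_{11}^2}}(0,1,a_{11})$ rather than applying it afterwards, and uses an arbitrary curve with given $\kappa(0)$ rather than the circles $c_\kappa$. Your explicit remark that $a_{11}\ne 0$ is needed for the deformation to detect $a_{03}$ is a point the paper leaves implicit, and is a welcome precision.
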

\begin{proof}
 Let $c(s)$ be a spherical curve with arc length parameter $s$ so that
 \[
     c(0)=(1,0,0),\qquad 
     \frac{dc}{ds}(0)=\frac{1}{\sqrt{1+a^2_{11}}}(0,1,a_{11}),
 \]
 and let $\vect{e}:=dc/ds$, $\vect{n}:=c\times \vect{e}$.
 Then
 \[
    \frac{d\vect{e}}{ds}(s)=\kappa(s)\vect{n}(s)-c(s),\qquad
    \frac{d\vect{n}}{ds}(s)=-\kappa(s)\vect{e}(s),
 \]
 hold, where $\kappa(s)$ is the curvature function.
 Using these, one can see that the cross cap $f=f_c$ given in 
 Theorem~\ref{thm:deform}
 has the following expansion:
 \begin{multline}\label{eq:cubic}
    f(u,v)=
        \left(
          u, uv, a_{11}uv +\frac{1}{2}a_{02}v^2\right)\\
     +\frac{\kappa(0)\sqrt{1+a_{11}^2}}{6}
      \left(
         0 , -3a_{11}uv^2-2a_{02}v^3, 3uv^2
      \right)+O(u,v)^4.
 \end{multline}
 By a parameter change $v=w+\frac12a_{11}\kappa(0)\sqrt{1+a_{11}^2}\,w^2$,
 \eqref{eq:cubic} is rewritten as 
 \begin{multline*}
    f(u,w)=
        \left(
          u, uw, a_{11}uw +\frac{1}{2}a_{02}w^2\right)\\
     +\frac{\kappa(0)\sqrt{1+a_{11}^2}}{6}
      \left(
         0 , -2a_{02}w^3, 3 (1+a_{11}^2) uw^2 + 3 a_{11}a_{02}w^3
      \right)+O(u,w)^4.
 \end{multline*}
 Thus, $(u,w)$ forms the canonical coordinate system up to 
the third order terms,
and the invariants $a_{12}$, $a_{03}$ and $b_3$ are expressed as 
\begin{equation}\label{eq:extinv}
\begin{gathered}
    a_{12}=\kappa(0)(1+a_{11}^2)^{3/2},\qquad
    a_{03}=3a_{02}a_{11}\kappa(0)\sqrt{1+a_{11}^2},\\
    b_3 = -2a_{02}\kappa(0)\sqrt{1+a_{11}^2}
\end{gathered}
\end{equation}
 which depend on the initial value $\kappa(0)$ of the
 geodesic curvature function, 
 and thus they are extrinsic.
\end{proof}
\begin{remark}
 By a straightforward calculation,
 one can also check  that
 $a_{0j}$, $a_{1j}$ and $b_j$
 for $j=3,4,5,\dots$
 all changes values by the same deformation
 as in the proof of Theorem \ref{thm:extrinsic}.
\end{remark}

\section{Differential geometry of cross caps}
\label{sec:geometry}

 Let $f:U\to \R^3$ be a $C^\infty$-map and $p\in U$ 
 a cross cap singularity.
 A local coordinate system $(u,v)$ centered at $p$ is said to be 
 {\it admissible\/} if it satisfies $f_v(0,0)=0$.
Canonical coordinate systems of cross caps are admissible.
The concept of admissible coordinate systems is intrinsic, 
since $\partial/\partial v$ at $(0,0)$ points the degenerate 
directions of the induced metrics. 
In contrast to Theorem~\ref{thm:extrinsic}, the following assertion 
holds:

\begin{theorem}\label{thm:intrinsic}
 The coefficients $a_{02}$, $a_{20}$ and $a_{11}$
 are intrinsic invariants of cross caps.
\end{theorem}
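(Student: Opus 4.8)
The plan is to prove the stronger statement that the induced metric $ds^2$ alone determines $a_{02},a_{20},a_{11}$: if two cross caps $f,\hat f$ have isometric first fundamental forms, then their three invariants coincide. The strategy is to read these invariants off the $2$-jet of $ds^2$ in the canonical coordinates, and then to show that a metric isometry is forced to preserve exactly that part of the $2$-jet. First I would expand \eqref{eq:cross}: at the origin the canonical coordinates give
\begin{align*}
 E&=1+a_{20}^2u^2+2a_{20}a_{11}uv+(1+a_{11}^2)v^2+O(3),\\
 F&=a_{20}a_{11}u^2+(1+a_{20}a_{02}+a_{11}^2)uv+a_{11}a_{02}v^2+O(3),\\
 G&=(1+a_{11}^2)u^2+2a_{11}a_{02}uv+a_{02}^2v^2+O(3).
\end{align*}
Hence, in canonical coordinates, $a_{02}=\sqrt{G_{vv}(0,0)/2}$ (the positive root, by \eqref{eq:a02}), $a_{11}=G_{uv}(0,0)/(2a_{02})$ and $a_{20}=\bigl(F_{uv}(0,0)-1-a_{11}^2\bigr)/a_{02}$.

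These formulas are not yet intrinsic, since the coefficients of $E,F,G$ depend on the chosen coordinates. So the heart of the proof is to compare $f$ and $\hat f$ through the diffeomorphism $\phi=(\phi^1,\phi^2)$ realizing $ds^2_f=\phi^*ds^2_{\hat f}$, each written in its own canonical coordinates. Since the kernel line $\R\,\partial_v$ of the degenerate metric and the value $E(0,0)=1$ are intrinsic, matching $0$- and $1$-jets forces $\phi^1_v(0,0)=0$ and $\phi^1_u(0,0)=\pm1$, so $D\phi(0,0)=\pmt{\pm1 & 0\\ \alpha & \beta}$ with $\alpha=\phi^2_u(0,0)$ and $\beta=\phi^2_v(0,0)>0$; take the sign $+1$. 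Because both metrics are canonical they carry no linear terms, and matching the linear parts of the pulled-back $E$ and $F$ kills the quadratic part of $\phi^1$. The delicate point is that the cubic part of $\phi^1$ still perturbs the quadratic parts of $E$ and $F$ (through $(\phi^1_u)^2$ and $\phi^1_u\phi^1_v$), whereas the quadratic part of $\phi^2$ does not touch the $2$-jet of the metric at all.

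I would therefore isolate the combinations of the metric $2$-jet that are insensitive to the cubic part of $\phi^1$: the entire $2$-jet of $G$, together with $J_1:=E_{uv}(0,0)-2F_{uu}(0,0)$ and $J_2:=\tfrac12 E_{vv}(0,0)-F_{uv}(0,0)$, which in canonical coordinates equal $-2a_{20}a_{11}$ and $-a_{20}a_{02}$. Transporting these under $\phi$ and matching gives, in order: the $2$-jet of $G$ yields $\beta=1$, $a_{02}=\hat a_{02}$ and $a_{11}=\hat a_{11}+\hat a_{02}\alpha$; the identity $J_2$ yields $a_{20}=\hat a_{20}+2\hat a_{11}\alpha+\hat a_{02}\alpha^2$; and finally $J_1$ produces the one extra relation $4\alpha=0$. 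Thus $\alpha=0$, and consequently $a_{02}=\hat a_{02}$, $a_{11}=\hat a_{11}$, $a_{20}=\hat a_{20}$, which is the claim.

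The main obstacle is precisely the bookkeeping of this coordinate freedom: one must notice that the cubic terms of $\phi^1$ contaminate the metric's $2$-jet, so that no single Taylor coefficient of $E$ or $F$ is intrinsic, and then recognize the right invariant combinations $J_1,J_2$; the decisive step is the computation showing that $J_1$ forces $\alpha=0$. It is reassuring that the quadratic part of $\phi^2$ and the cubic part of $\phi^1$ survive untouched, which is exactly the freedom responsible for the extrinsic invariants of Theorem~\ref{thm:extrinsic}. As an independent check on $a_{20}$, its sign is already detected by the Gaussian curvature near the singular point, which is intrinsic.
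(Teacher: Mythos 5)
Your proposal is correct --- I checked the computations: the $0$- and $1$-jet matching does force $\phi^1_v(0,0)=0$, $(\phi^1_u(0,0))^2=1$ and the vanishing of the quadratic part of $\phi^1$; the quadratic parts of the metric then transform as $E_2=\hat E_2+2\alpha\hat F_2+\alpha^2\hat G_2+2\partial_uP_3$, $F_2=\beta\hat F_2+\alpha\beta\hat G_2+\partial_vP_3$, $G_2=\beta^2\hat G_2$ (right-hand sides evaluated at $(u,\alpha u+\beta v)$, $P_3$ the cubic part of $\phi^1$), so the $2$-jet of $G$ and your $J_1$, $J_2$ are indeed unpolluted by $P_3$ and by the quadratic part of $\phi^2$, and the resulting chain $\beta=1$, $a_{02}=\hat a_{02}$, $a_{11}=\hat a_{11}+\hat a_{02}\alpha$, $a_{20}=\hat a_{20}+2\hat a_{11}\alpha+\hat a_{02}\alpha^2$, $0=-4\alpha$ all verify. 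However, your route is genuinely different from the paper's. The paper never compares two isometric cross caps: it writes closed-form expressions \eqref{eq:a02b}--\eqref{eq:a11} for $a_{02}$, $a_{20}$, $a_{11}$ in terms of $f_u$, $f_{uu}$, $f_{uv}$, $f_{vv}$ at the origin, valid in any admissible coordinate system normalized by $[f_u,f_{uv},f_{vv}]>0$, and then converts each ingredient into derivatives of $E$, $F$, $G$ at the origin through Gram-determinant identities such as \eqref{eq:delta}, which exploit $f_v(0,0)=0$. That buys explicit, ready-to-use intrinsic formulas, which the paper transplants verbatim to cross caps in arbitrary Riemannian $3$-manifolds at the end of Section~\ref{sec:geometry}. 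Your argument instead isolates exactly which part of the metric's $2$-jet in canonical coordinates is invariant under the residual diffeomorphism freedom; it buys structural insight --- the surviving freedom ($Q_2$ and $P_3$) is precisely what makes $a_{12}$, $a_{03}$, $b_3$ extrinsic in Theorem~\ref{thm:extrinsic} --- and it is in essence the jet-matching technique the paper itself deploys later, in Section~\ref{sec:deform2}, for the proposition on quantities preserved under isometric deformations.

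One caveat you should make explicit: your normalization $\phi^1_u(0,0)=+1$, $\beta>0$ amounts to assuming $\phi$ preserves orientation (the orientation-preserving case $\phi^1_u(0,0)=-1$, $\beta<0$ reduces to yours by the symmetry $(\alpha,\beta,P_3)\mapsto(-\alpha,-\beta,-P_3)$, but the orientation-reversing case does not). This is not removable: the reflection $(u,v)\mapsto(u,-v)$ is an isometry from the induced metric of the cross cap with data $(a_{20},a_{11},a_{02})$ onto that of its mirror image, whose canonical data is $(a_{20},-a_{11},a_{02})$, and running your own computation with $\beta=-1$ yields exactly $a_{11}=-\hat a_{11}$. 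So $a_{11}$ is intrinsic only as an invariant of the \emph{oriented} metric germ. This convention is implicit in the paper as well --- canonical coordinates are by definition oriented, and the sign of $[f_u,f_{uv},f_{vv}]$ entering \eqref{eq:a11} is likewise not determined by the metric alone --- so this is a shared, not a new, imprecision; but since your proof makes the sign choices explicit, state orientation-preservation as a hypothesis.
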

\begin{proof}
 Let $(0,0)$ be a cross cap singularity of a $C^\infty$-map 
 $f:(U;u,v) \to \R^3$,
 and $(u,v)$ be an admissible coordinate system.
 Without loss of generality, we may assume that
 \begin{equation*}
     [f_u,f_{uv},f_{vv}]>0
 \end{equation*}
 by applying the coordinate change $(u,v)\mapsto (-u,-v)$
 if necessary, where 
 \[
    [\vect{a},\vect{b},\vect{c}]:=
        \det(\vect{a},\vect{b},\vect{c})
        =(\vect{a}\times\vect{b})\cdot \vect{c}
 \]
 and $\times$ is the vector product of $\R^3$.

 Then we have that
 \begin{align}
  \label{eq:a02b}
    a_{02}&=\frac{|f_u|\,|f_u\times f_{vv}|^3}{[f_u,f_{uv},f_{vv}]^2}, \\
  \label{eq:a20}
    a_{20}&=\frac{|f_u\times f_{vv}|}{4|f_u|^3[f_u,f_{uv},f_{vv}]^2} 
        \biggl( 
            [f_u,f_{uu},f_{vv}]^2\\
        &\hspace{0.4\textwidth}
               +4 [f_u,f_{uv},f_{vv}][f_u,f_{uv},f_{uu}]
        \biggr), \nonumber\\
  \label{eq:a11}
    a_{11}&=\frac{1}{2|f_u|[f_u,f_{uv},f_{vv}]^2} 
        \biggl( 
          2[f_u,f_{uv},f_{vv}]\det\pmt{f_u\cdot f_u & f_u\cdot f_{uv} 
           \\ f_{vv}\cdot f_u & f_{vv}\cdot f_{uv}} \\
  &\hspace{0.45\textwidth}
           -|f_u\times f_{vv}|^2[f_u,f_{uu},f_{vv}] \nonumber 
         \biggr)
 \end{align}
 hold at $(u,v)=(0,0)$.
 One can prove these identities immediately: 
 In fact, the right-hand sides of these identities are 
 independent of the choice of admissible coordinate systems,
 and these identities themselves can be directly verified 
 for the canonical coordinate system of $f$.

 We now set
 \[
     E:=f_u\cdot f_u,\quad
     F:=f_u\cdot f_v,\quad
     G:=f_v\cdot f_v,
 \]
 which are the coefficients of the induced metric of the cross cap. 
 It is sufficient to show that the right-hand sides of \eqref{eq:a02b},
 \eqref{eq:a20} and \eqref{eq:a11} are
 written in terms of derivatives of $E$, $F$ and $G$
 at $(0,0)$.

 We first show that $a_{02}$ is intrinsic:
 Since $f_v(0,0)=0$, it holds that
 \begin{equation}\label{eq:delta}
    [f_u,f_{uv},f_{vv}]^2
     =\det\left(\pmt{f_u\\ f_{uv}\\ f_{vv}}(f_u,f_{uv},f_{vv})\right)
     =
     \det\pmt{E    & F_u      & F_v \\
              F_u  & G_{uu}/2 & G_{uv}/2 \\
              F_v  & G_{uv}/2 & G_{vv}/2}
 \end{equation}
 at $(u,v)=(0,0)$,
 where we used the identities
 \begin{align*}
    &f_u\cdot f_u=E,\quad 
     f_u\cdot f_{uv}=F_u,\quad 
     f_u\cdot f_{vv}=F_v,\\
    &f_{uv}\cdot f_{uv}=\frac{G_{uu}}2,\quad 
     f_{uv}\cdot f_{vv}=\frac{G_{uv}}2,\quad 
     f_{vv}\cdot f_{vv}=\frac{G_{vv}}2
 \end{align*}
 at $(u,v)=(0,0)$.
 Since
 \[
     |f_u|^2=E,\quad
     |f_u\times f_{vv}|^2=
            (f_u\cdot f_u)(f_{vv}\cdot f_{vv})-(f_u\cdot f_{vv})^2
           =\frac{EG_{vv}}2-(F_{v})^2
 \]
 at $(u,v)=(0,0)$, we can conclude that $a_{02}$ is an intrinsic invariant.

 Similarly, to prove $a_{20}$ and $a_{11}$ are intrinsic,
 it is sufficient to show that
 $[f_u,f_{uu},f_{vv}]$ and $[f_u,f_{uv},f_{uu}]$
 are both written in terms of derivatives of $E$, $F$ and $G$ at $(0,0)$.
 In fact, \eqref{eq:delta} implies that
 $[f_u,f_{uu},f_{vv}]$ is intrinsic, because 
 \begin{align*}
    [f_u,f_{uu},f_{vv}]
      &=\frac1{[f_u,f_{uv},f_{vv}]}
         \det\left(\pmt{f_u\\ f_{uv}\\ f_{vv}}
            (f_u,f_{uu},f_{vv})\right)\\
      &=\frac1{[f_u,f_{uv},f_{vv}]}
 \det\pmt{f_u\cdot f_u & f_u\cdot f_{uu}& f_u\cdot f_{vv}\\
    f_{uv}\cdot f_u& f_{uv}\cdot f_{uu}& f_{uv}\cdot f_{vv} \\
    f_{vv}\cdot f_u & f_{vv}\cdot f_{uu}& f_{vv}\cdot f_{vv}}\\
      &=\frac1{[f_u,f_{uv},f_{vv}]}
         \det\pmt{E &   f_u\cdot f_{uu} & F_v\\
              F_u  & f_{uv}\cdot f_{uu} & G_{uv}/2 \\
              F_v  & f_{vv}\cdot f_{uu}& G_{vv}/2}
 \end{align*}
 holds at $(u,v)=(0,0)$, and
 \begin{align*}
     &f_u(0,0)\cdot f_{uu}(0,0)=\frac{E_u(0,0)}2,\\
     &f_{uv}(0,0)\cdot f_{uu}(0,0)=F_{uu}(0,0)-\frac{E_{uv}(0,0)}2,\\
     &f_{vv}(0,0)\cdot f_{uu}(0,0)=F_{uv}(0,0)-\frac{E_{vv}(0,0)}2.
 \end{align*}
 Similarly, $[f_u,f_{uv},f_{uu}]$ is intrinsic, 
 because of the identity
 \[
    [f_u,f_{uv},f_{uu}]
         =\frac1{[f_u,f_{uv},f_{vv}]}
           \det\left(\pmt{f_u\\ f_{uv}\\ f_{vv}}
          (f_u,f_{uv},f_{uu})\right).
 \]
\end{proof}

\begin{remark}
The value $\Delta:=[f_u(0,0),f_{uv}(0,0),f_{vv}(0,0)]$
is a criterion of cross cap singularities.
In the above proof (cf. \eqref{eq:delta}), we showed the identity
$$
\Delta^2=
     \det\pmt{E    & F_u      & F_v \\
              F_u  & G_{uu}/2 & G_{uv}/2 \\
              F_v  & G_{uv}/2 & G_{vv}/2}
$$
at $(u,v)=(0,0)$, which implies that $\Delta$
is intrinsic. Moreover, we set
$$
h(u,v):=E(u,v)G(u,v)-(F(u,v))^2.
$$
Using the fact that $(u,v)$ is admissible,
one can easily prove
$$
\Delta^2=\frac1{4E(0,0)}\biggl(h_{uu}(0,0)h_{vv}(0,0)-(h_{uv}(0,0))^2
\biggr),
$$
that is, $\Delta$ is closely related to the Hessian of $h$.
Since $h(u,v)$ is non-negative and $h(0,0)=0$,
it holds that $h_{vv}(0,0)\ge 0$.
Moreover, the identity 
$$
a_{02}=\frac{\sqrt{E(0,0)}(h_{vv}(0,0))^{3/2}}{2\Delta^2}
$$
holds.
\end{remark}

In \cite{W}, \cite{T} and \cite{FH},
ellipticity, hyperbolicity and parabolicity of cross caps are defined.
The following assertion holds: 
\begin{corollary}
 The ellipticity, hyperbolicity and parabolicity of 
 cross caps in $\R^3$ are all intrinsic properties.
\end{corollary}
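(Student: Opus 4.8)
The plan is to reduce the entire statement to the intrinsicality of $a_{20}$, which has just been established in Theorem~\ref{thm:intrinsic}. First I would recall, following \cite{W}, \cite{T} and \cite{FH}, how the three types are defined. As explained in the introduction, they are detected by the focal conic
\[
 y^2+2a_{11}yz - (a_{20}a_{02}-a_{11}^2)z^2 +a_{02}z = 0,
\]
a conic in the $(y,z)$-plane whose quadratic part $y^2+2a_{11}yz-(a_{20}a_{02}-a_{11}^2)z^2$ has discriminant
\[
 a_{11}^2+(a_{20}a_{02}-a_{11}^2)=a_{20}a_{02}.
\]
Since $a_{02}>0$ by the normalization \eqref{eq:a02}, the sign of this discriminant equals that of $a_{20}$. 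Hence the conic is an ellipse, a parabola or a hyperbola exactly when $a_{20}<0$, $a_{20}=0$ or $a_{20}>0$, respectively, and the cross cap is elliptic, parabolic or hyperbolic accordingly. In short, the whole trichotomy is encoded in the single sign $\operatorname{sgn}(a_{20})$.

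The second and final step is to invoke Theorem~\ref{thm:intrinsic}: the coefficient $a_{20}$ is an intrinsic invariant, being expressible via \eqref{eq:a20} through derivatives of the coefficients $E$, $F$, $G$ of the induced positive semidefinite metric $ds^2$ at the singular point. In particular $\operatorname{sgn}(a_{20})$ is determined by $ds^2$ alone, and therefore so is whether the cross cap is elliptic, parabolic or hyperbolic. This proves the corollary.

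There is no substantial obstacle, since the statement is a direct corollary of Theorem~\ref{thm:intrinsic}; the only point requiring care is bookkeeping. One must confirm that the definitions of ellipticity, hyperbolicity and parabolicity adopted in \cite{W}, \cite{T} and \cite{FH} genuinely coincide with the sign conditions on $a_{20}$ listed above, so that the classification is really captured by a single intrinsic quantity rather than by some finer data not controlled by Theorem~\ref{thm:intrinsic}.
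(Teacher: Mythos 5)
Your treatment of ellipticity and hyperbolicity is sound and is exactly the paper's argument: both properties are determined by $\operatorname{sgn}(a_{20})$, which is intrinsic by Theorem~\ref{thm:intrinsic}. (A minor bookkeeping slip: according to the paper, the cross cap is \emph{elliptic} when $a_{20}>0$, which is when the focal conic is a \emph{hyperbola}, and hyperbolic when $a_{20}<0$, i.e.\ when the focal conic is an ellipse; your ``accordingly'' pairs them the other way. This crossover is immaterial for intrinsicality, since only the sign of $a_{20}$ matters.)

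The genuine gap is in parabolicity. You identify ``parabolic'' with $a_{20}=0$ (equivalently, the focal conic being a parabola), but that is not the definition used in \cite{W}, \cite{T}, \cite{FH}, and your own closing caveat --- that one must confirm the definitions coincide with the sign conditions --- fails precisely here. As the paper records from \cite{FH}, a cross cap is parabolic if and only if $a_{20}=0$ \emph{and} the zero set $Z_K$ of the Gaussian curvature is a regular curve in the $r\theta$-plane (where $u=r\cos\theta$, $v=r\sin\theta$) tangent to the line $r=0$. Degeneracy alone does not suffice: for instance, the standard cross cap $f_{\can}(u,v)=(u,uv,v^2)$ has $a_{20}=0$, yet its Gaussian curvature vanishes only along $\sin\theta=0$, a set transversal to $r=0$, so it is not parabolic. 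Consequently the trichotomy is \emph{not} encoded in $\operatorname{sgn}(a_{20})$ alone, and the intrinsicality of parabolicity requires the additional argument the paper supplies: the set $Z_K$ is intrinsic (the Gaussian curvature at regular points is determined by the induced metric), and the tangency property of $Z_K$ to $r=0$ does not depend on the choice of admissible coordinate system. Without this extra step your proof covers only two of the three properties claimed in the corollary.
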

\begin{proof}
 A cross cap is elliptic (resp.\ hyperbolic) if $a_{20}>0$
 (resp.\ $a_{20}<0$).
 Since we have already seen that $a_{20}$ is intrinsic, 
 ellipticity and hyperbolicity are as well.
 In \cite{FH}, it was shown that a cross cap is parabolic 
 if and only if $a_{20}=0$ and the zero set $Z_K$
 of the Gaussian curvature  gives a regular curve in 
 the $r\theta$-plane which is tangent to the line $r=0$,
 where 
 \[
    u=r\cos \theta,\qquad v=r \sin \theta 
 \]
 and $(u,v)$ is a canonical coordinate system.
 Since the set $Z_K$ is intrinsic and this tangency
 property does not depend  on the choice of an admissible 
 coordinate system, we get the assertion.
\end{proof}

We fix a cross cap $f:(U;u,v)\to \R^3$,
where $(u,v)$ is an admissible coordinate system, 
that is, $(u,v)=(0,0)$ is a cross cap singularity and $f_v(0,0)=0$.
We call the line
\[
   \{f(0,0)+t f_u(0,0)\,;\, t\in \R\}
\]
the {\it tangential line\/} at the cross cap and a non-zero vector at 
$T_{f(0,0)}\R^3$ proportional to $f_u(0,0)$ is called the
{\it tangential direction}.
The plane passing through $f(0,0)$ spanned by $f_u(0,0)$ and 
$f_{vv}(0,0)$ is called the {\it principal plane}.
On the other hand,
the plane passing through $f(0,0)$ perpendicular to 
the proper tangential direction is called the {\it normal plane}.
The unit normal vector $\nu(u,v)$ near the cross cap at $(u,v)=(0,0)$ 
can be extended as a $C^\infty$-function of $r$, $\theta$ 
by setting $u=r \cos \theta$ and $v=r \sin \theta$, and the
limiting normal vector
\begin{equation*}
    \nu(\theta):=\lim_{r\to 0}\nu(r \cos \theta,r \sin \theta)
                         \in T_{f(0,0)}\R^3
\end{equation*}
lies in the normal plane.
Then, one can consider the parallel family of cross caps
\begin{equation}\label{eq:parallel}
    f_t(r,\theta):=f(r,\theta)+t \nu(r,\theta)
\end{equation}
which are $C^\infty$-maps with respect to $(r,\theta)$,
even at $r=0$.
The focal surface of this parallel family meets the normal plane 
at the focal conic as mentioned in the introduction.
On the other hand, the principal plane has the following property:
\begin{proposition}
 The initial velocity vector of the space curve emanating 
 from the cross cap singularity which parametrizes the 
 self-intersection is contained in the principal plane.
\end{proposition}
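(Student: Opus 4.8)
The plan is to reduce everything to the canonical coordinate system and then to locate the self-intersection curve to leading order. First I observe that both the self-intersection curve and the principal plane are objects of the ambient $\R^3$ attached to the map germ $f$. The principal plane, in particular, does not depend on the choice of admissible coordinates: under an admissible coordinate change (one preserving $f_v(0,0)=0$) the tangential direction $f_u(0,0)$ is only rescaled, and a short chain-rule computation shows that the new $f_{vv}(0,0)$ is a linear combination of the old $f_u(0,0)$ and $f_{vv}(0,0)$, so the spanned plane is unchanged. Hence I may assume $(u,v)$ is the canonical coordinate system and use the normal form \eqref{eq:cross}. Differentiating \eqref{eq:cross} gives $f_u(0,0)=(1,0,0)$ and $f_{vv}(0,0)=(0,0,a_{02})$, so, since $a_{02}>0$ by \eqref{eq:a02}, the principal plane is exactly the coordinate plane $\{y=0\}$. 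It therefore suffices to show that the initial velocity vector of the self-intersection curve has vanishing second ($y$-)component.

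Next I would describe the self-intersection set as pairs of distinct parameters $(u,v)\neq(u',v')$ with $f(u,v)=f(u',v')$. Because the first component of \eqref{eq:cross} is exactly $u$, the two parameters share their first coordinate, $u=u'$. The remaining two components yield two equations which, after dividing by the nonzero factor $(v-v')$, reduce to a system whose lowest-order parts read
\[
   u+\frac{b_3}{6}(v^2+vv'+v'^2)+\dots=0,\qquad
   \frac{a_{02}}{2}(v+v')+a_{11}u+\dots=0 .
\]
The first equation shows that $u$ is second order in $(v,v')$; feeding this into the second and using $a_{02}>0$ forces $v+v'$ to be second order as well, i.e.\ $v'=-v+O(v^2)$.

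To obtain a clean, genuinely smooth parametrization I would pass to the symmetric and antisymmetric variables $v=p+q$, $v'=p-q$, where $q\neq 0$ encodes $v\neq v'$ and exchanging the two sheets is the involution $q\mapsto -q$. By the implicit function theorem the reduced system determines $p=p(q)$ and $u=u(q)$ with $p=O(q^2)$ and $u=-\tfrac{b_3}{6}q^2+O(q^3)$. Since swapping sheets fixes the image point, the map $q\mapsto f(u(q),v(q))$ is an \emph{even} function of $q$; thus $\tau:=q^2$ is a smooth regular parameter along the self-intersection curve $\Gamma$ emanating from $f(0,0)$. Substituting the leading terms into \eqref{eq:cross} gives, to first order in $\tau$, first component $-\tfrac{b_3}{6}\tau$ and third component $\tfrac{a_{02}}{2}\tau$, while the crucial second component is $O(\tau^2)$ because the two order-$q^3$ contributions $uv$ and $\tfrac{b_3}{6}v^3$ cancel. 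Hence the initial velocity of $\Gamma$ is proportional to $(-\tfrac{b_3}{6},0,\tfrac{a_{02}}{2})$, which lies in the principal plane $\{y=0\}$, proving the claim.

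The main obstacle is the final step: establishing that the $y$-component really vanishes to the order needed, and that $\tau=q^2$ is a legitimate regular parameter. The first point rests on tracking the cancellation $uv=-\tfrac{b_3}{6}q^3$ against $\tfrac{b_3}{6}q^3$ carefully through the higher-order remainders of \eqref{eq:cross}; the second relies on the evenness in $q$ forced by the sheet-exchange symmetry together with the implicit function theorem guaranteeing that the branch $(u(q),p(q))$ is smooth with the stated leading behavior. Everything else is a routine expansion of the normal form.
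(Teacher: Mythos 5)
Your proof is correct, but it takes a genuinely different route from the paper's. The paper's proof is essentially one line: since a cross cap is, by definition, carried to the standard cross cap $f_{\can}(u,v)=(u,uv,v^2)$ by local diffeomorphisms $\phi$ of $\R^2$ and $\Phi$ of $\R^3$, and since both the self-intersection curve and the principal plane transform naturally under such diffeomorphisms (the key point being that $f_v(0,0)=0$ kills the Hessian term in $(\Phi\circ f)_{vv}(0,0)=d\Phi\bigl(f_{vv}(0,0)\bigr)$, so $d\Phi$ maps the span of $f_u(0,0),f_{vv}(0,0)$ onto the corresponding span for $\Phi\circ f\circ\phi^{-1}$), the assertion need only be checked for $f_{\can}$ itself, whose self-intersection is the positive $z$-axis and whose principal plane is the $xz$-plane. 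You instead work directly with the normal form \eqref{eq:cross}: divided differences plus the implicit function theorem to solve for the double-point pairs, the sheet-exchange involution $q\mapsto -q$ together with Whitney's theorem on even functions to get the smooth regular parameter $\tau=q^2$, and an explicit cancellation to kill the $y$-component. What your route buys: it actually constructs the self-intersection curve (existence, smoothness, and regularity, which the paper silently imports from the model via the equivalence), and it yields the initial velocity explicitly, proportional to $\left(-\tfrac{b_3}{6},0,\tfrac{a_{02}}{2}\right)$ in canonical coordinates --- quantitative information invisible to the paper's argument, and consistent with Theorem~\ref{thm:normal}, since this velocity lies also in the normal plane exactly when $b_3=0$. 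What the paper's route buys: brevity, and freedom from the technical points your sketch must handle in the $C^\infty$ category (smooth division of the remainder terms by $v-v'$ via the Hadamard lemma, and the evenness argument); your outline flags these correctly and they all go through, so they are not gaps, but they are real work that the invariance argument avoids entirely.
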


\begin{proof}
 Since the principal plane is invariant under diffeomorphisms of $\R^3$, 
 this assertion can be verified by the standard cross cap.
\end{proof}

\red{To get much precise information to the set of
cross caps, we give the following definition:}

\red{
\begin{definition}
A germ of cross cap $f:U\to \R^3$ is called {\it normal}
if the set of self-intersections is contained 
in the intersection of the principal plane and
the normal plane.
\end{definition} 
}

\red{The quadratic cross caps defined in Section 2
are all normal. 
We get the following criterion of normal cross caps:}

\red{
\begin{theorem}\label{thm:normal}
The germ of a real analytic 
cross cap is normal if and only if
all of the invariants $(b_j)_{j=3,4,5,\cdots}$
associated to its normal form \eqref{eq:cross} 
vanishes simultaneously.
\end{theorem}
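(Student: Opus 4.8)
The plan is to work entirely in the canonical coordinate system, where the real-analytic normal form reads $f(u,v)=\bigl(u,\;uv+\beta(v),\;\gamma(u,v)\bigr)$ with $\beta(v)=\sum_{j\ge 3}\frac{b_j}{j!}v^j$ (order $\ge 3$) and $\gamma(u,v)=\frac{a_{20}}{2}u^2+a_{11}uv+\frac{a_{02}}{2}v^2+(\text{order}\ge 3)$, $a_{02}>0$. First I would pin down the two planes concretely: from $f_u(0,0)=(1,0,0)$ and $f_{vv}(0,0)=(0,0,a_{02})$ one reads off that the principal plane is $\{y=0\}$ and the normal plane is $\{x=0\}$, so their intersection is the $z$-axis. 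Hence \emph{normal} means exactly that every self-intersection point has vanishing $x$- and $y$-coordinates.

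Next I would describe the self-intersection set. Because the first coordinate of $f$ is exactly $u$, the equation $f(u_1,v_1)=f(u_2,v_2)$ forces $u_1=u_2=:u$, and a genuine double point has $v_1\ne v_2$. Dividing the second- and third-coordinate equations by the nonzero factor $v_1-v_2$ (divided differences), the second coordinate yields $u=-\tilde\beta(v_1,v_2)$, where $\tilde\beta(v_1,v_2)=\frac{\beta(v_1)-\beta(v_2)}{v_1-v_2}=O(v^2)$, while the third coordinate reduces, after substituting this $u$, to $H(v_1,v_2):=\frac{a_{02}}{2}(v_1+v_2)+a_{11}u+(\text{higher order})=0$. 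Since $a_{02}>0$ and $u=O(v^2)$, one has $H(0,0)=0$ and $\partial H/\partial v_2(0,0)=a_{02}/2\ne 0$, so the implicit function theorem produces a real-analytic solution $v_2=\psi(v_1)$ with $\psi(0)=0$ and $\psi'(0)=-1$; in particular $\psi(v_1)\ne v_1$ for small $v_1\ne 0$. Thus the self-intersection set is the real-analytic curve parametrized, for $v_1$ in a punctured neighborhood of $0$, by the common image $f\bigl(u(v_1),v_1\bigr)$ with $u(v_1)=-\tilde\beta\bigl(v_1,\psi(v_1)\bigr)$.

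With this parametrization the equivalence is immediate. For the forward implication, normality forces $x=u(v_1)=0$ and $y=u(v_1)v_1+\beta(v_1)=0$ along the curve; the first gives $u(v_1)\equiv 0$, and substituting into the second gives $\beta(v_1)=0$ for all $v_1$ in a punctured neighborhood of $0$, whence $\beta\equiv 0$ by real analyticity, i.e.\ all $b_j$ vanish. Conversely, if every $b_j$ vanishes then $\beta\equiv 0$, the second coordinate is $uv$, and any double point ($v_1\ne v_2$) forces $u=0$; the common image is then $\bigl(0,0,\gamma(0,v_1)\bigr)$, which lies on the $z$-axis, so $f$ is normal (the required containment holds trivially even if the double-point set happens to be empty).

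The main obstacle is the middle step: one must justify the passage from the two raw coordinate equations to the divided-difference form and then verify that the resulting locus is genuinely a curve through the origin whose first-sheet parameter $v_1$ sweeps a full neighborhood of $0$ — this is exactly where $a_{02}>0$ is used, since it supplies the transversality $\partial H/\partial v_2(0,0)\ne 0$ needed for the implicit function theorem. A secondary point to check is that in the real-analytic canonical form the first coordinate is exactly $u$ and the $u$-dependence of the second coordinate is exactly the single term $uv$; once these structural normalizations are in place, the evaluation of $x$ and $y$ along the self-intersection curve is routine.
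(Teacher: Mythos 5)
Your proposal is correct, and its skeleton matches the paper's: both work in the exact real-analytic normal form \eqref{eq:cross}, identify the principal plane as $\{y=0\}$ and the normal plane as $\{x=0\}$ (so normality means the self-intersections lie on the $z$-axis), and reduce the whole statement to the vanishing of $\beta(v)=\sum_{j\ge3}b_jv^j/j!$. Where you genuinely differ is in how the self-intersection set is handled, and your treatment is the more self-contained one. The paper's proof of the ``normal $\Rightarrow$ all $b_j=0$'' direction reads off $u=0$ and $uv+\beta(v)=0$ along the preimage of the self-intersection set $S$ and then asserts that the $v$-axis parametrizes $S$; this tacitly uses that $S$ is a non-empty curve germ swept out by the $v$-axis (a fact inherited from the standard cross cap through the defining diffeomorphisms) --- without it, an empty or too-small $S$ would make the normality hypothesis vacuous and $\beta\equiv0$ would not follow. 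Your divided-difference plus implicit-function-theorem construction of the double-point curve $v_2=\psi(v_1)$, $\psi'(0)=-1$, with $v_1$ sweeping a full neighborhood of $0$, supplies exactly this non-emptiness and parametrization explicitly, and correctly isolates $a_{02}\ne0$ as the transversality hypothesis making it work. Conversely, for ``all $b_j=0\Rightarrow$ normal'' the paper uses the substitution $w$ with $w^2=\sum_{n\ge2}a_{0n}v^n/n!$ to exhibit the identification $f(0,w)=(0,0,w^2)=f(0,-w)$, whereas your observation that any double point forces $u=0$ directly proves the containment for \emph{all} self-intersections, which is what normality literally requires. In short: your argument is longer but closes the small gaps the paper leaves to the reader's knowledge of cross-cap germs; the paper's is shorter because it leans on that structural knowledge.
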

}
\red{
\begin{proof}
Without loss of generality, we may assume that 
the given cross cap $f$ has an expression as in
\eqref{eq:cross}. 
We set
$$
\beta(v):=\sum_{i=3}^\infty \frac{b_iv^i}{i!}.
$$
Since $f$ is real analytic, $\beta$ is a real analytic
function. If $\beta$ vanishes identically, then
$$
f(0,v)
=\left(0,0,
\sum_{n=2}^\infty 
\frac{a_{0n}v^{n}}{n!}\right).
$$
Since $a_{02}>0$, 
$$
w:=\sqrt{\sum_{n=2}^\infty 
\frac{a_{0n}v^{n}}{n!}}
$$
is well-defined and gives a real analytic
function. 
Replacing the coordinate system $(u,v)$ by
$(u,w)$,
we have that
$$
f(0,w)
=(0,0,w^2)=f(0,-w),
$$
which implies that 
the set of self-intersection of $f$
lies in the third axis,
namely,
the set of self-intersections is contained 
in the intersection of the principal plane and
the normal plane.
Conversely, we assume that the set $S$ of self-intersection
lies in the third axis.
Then the first and the second components
of \eqref{eq:cross} yield that
$$
u=0,\qquad uv+\beta(v)=0
$$
hold along $S$. 
Thus the $v$-axis parametrizes the set $S$ and
$\beta(v)$ vanishes identically,
which proves the assertion.
\end{proof}
}

As pointed out in the introduction, $a_{20}$ is 
an important intrinsic invariant of cross caps related
to the sign of the Gaussian curvature.
The following assertion can be proved easily,
which gives a geometric meaning for $a_{20}$:
\begin{proposition}
 The section of a cross cap by its principal plane
contains a regular curve $\gamma$ whose velocity vector
 is $f_u(0,0)$, and then the curvature of
 $\gamma$ as a plane curve at the cross cap is equal to $a_{20}$, 
 where we give the orientation to the principal
 plane so that $\{f_u(0,0), f_{vv}(0,0)\}$ is a positive frame.
\end{proposition}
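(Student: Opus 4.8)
The plan is to reduce everything to the canonical coordinate system of \eqref{eq:cross}. The principal plane, the orientation of it prescribed by the frame $\{f_u(0,0),f_{vv}(0,0)\}$, and the curvature of a plane curve are all invariants of rigid motions of $\R^3$, while the coefficient $a_{20}$ is itself read off from the normal form \eqref{eq:cross}. Hence I may assume that $f$ is already presented as in \eqref{eq:cross} with $a_{02}>0$.

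First I would compute the low-order derivatives at the origin directly from \eqref{eq:cross}, which give $f_u(0,0)=(1,0,0)$, $f_{vv}(0,0)=(0,0,a_{02})$ and $f_{uu}(0,0)=(0,0,a_{20})$. Since $a_{02}>0$, the principal plane spanned by $f_u(0,0)$ and $f_{vv}(0,0)$ is precisely the $xz$-plane $\{y=0\}$, and the positive frame $\{f_u(0,0),f_{vv}(0,0)\}=\{(1,0,0),(0,0,a_{02})\}$ induces the standard orientation of the $(x,z)$-plane.

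Next I would locate the section curve inside $\{y=0\}$. The second component of \eqref{eq:cross}, namely $Y(u,v)=uv+\sum_{i\ge 3}b_iv^i/i!$, has a node at the origin whose two branches are tangent to $\{u=0\}$ and $\{v=0\}$; the branch tangent to the tangential direction $f_u(0,0)$ is $\{v=0\}$. Setting $v=0$ then yields $\gamma(u):=f(u,0)=(u,0,z(u))$ with $z(u)=\sum_{r\ge 2}a_{r0}u^r/r!$, which lies in $\{y=0\}$ and satisfies $\gamma'(0)=f_u(0,0)=(1,0,0)\ne 0$, so $\gamma$ is regular with the required velocity vector. For real analytic (in particular quadratic) cross caps this branch is exactly $\gamma$; in general only the $2$-jet of the section matters below, and this is unaffected by the remainder term of \eqref{eq:cross}.

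Finally I would compute the signed curvature of $\gamma$ at the cross cap. Regarding $\gamma$ as the graph $z=z(x)$ over the positively oriented $(x,z)$-plane, the signed curvature at the origin equals $z''(0)/(1+z'(0)^2)^{3/2}$. From $z(u)=\tfrac12 a_{20}u^2+O(u^3)$ one reads off $z'(0)=0$ and $z''(0)=a_{20}$, so the curvature is $a_{20}$, as claimed. The step I expect to require the most care is the bookkeeping of signs: I must verify that the orientation fixed by the positive frame $\{f_u(0,0),f_{vv}(0,0)\}$ produces exactly $+a_{20}$ and not $-a_{20}$, and that the velocity condition indeed selects the branch $v=0$ rather than the other branch of the section.
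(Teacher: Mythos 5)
Your argument is correct, and it actually supplies what the paper leaves out: the paper states this proposition with no proof at all, saying only that it ``can be proved easily,'' and the intended verification is precisely the direct computation in canonical coordinates that you perform. Your reduction to the normal form \eqref{eq:cross} is legitimate (the normal form is achieved by a rigid motion of $\R^3$, and the principal plane, the orientation prescribed by $\{f_u(0,0),f_{vv}(0,0)\}$, and the signed curvature of a plane curve are all rigid-motion invariants); the identification of the principal plane with $\{y=0\}$ and of its orientation with the standard one on the $(x,z)$-plane uses $a_{02}>0$ correctly; and the graph computation $z'(0)=0$, $z''(0)=a_{20}$ gives signed curvature exactly $+a_{20}$. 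The one place you are slightly informal is the Taylor remainder $O(u,v)^{n+1}$: for a general $C^\infty$ cross cap the section branch is not literally $\{v=0\}$, and its existence needs a word of justification, e.g.\ via the Morse lemma: the second component $Y$ of $f$ has a nondegenerate saddle at the origin (its Hessian there is $\begin{pmatrix}0&1\\ 1&0\end{pmatrix}$), so its zero set is a node with branches tangent to $\{u=0\}$ and $\{v=0\}$ even in the presence of the remainder. Writing the latter branch as $v=\phi(u)$ with $\phi(0)=\phi'(0)=0$, one gets $\gamma(u)=f(u,\phi(u))$, whose velocity at $u=0$ is $f_u(0,0)$ (using $f_v(0,0)=0$), and whose $2$-jet is $\bigl(u,0,\tfrac{a_{20}}{2}u^2\bigr)$ independently of $\phi''(0)$ and of the remainder --- which is exactly the claim you assert when you say only the $2$-jet matters. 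With that one sentence added, your proof is complete.
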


Intersections of a cross cap with planes are discussed in
\cite{FH2}.  

\medskip
At the end of this section, we discuss on cross caps
in an arbitrary Riemannian $3$-manifold $(N^3,g)$.
Let $f:M^2\to (N^3,g)$ be a $C^\infty$-map having 
a cross cap singularity at $p\in M^2$, 
where $M^2$ is a $2$-manifold.
Then there exists a local coordinate system $(u,v)$ 
of $M^2$ centered at $p$ 
and a normal coordinate system $(x,y,z)$ of $(N^3,g)$
centered at $f(p)$ such that (cf.\ \eqref{eq:cross})
\[
   f(u,v)=
   \left(u,uv,\frac{a_{20}}{2}u^2+a_{11}uv+\frac{a_{02}}{2}v^2
   \right)+O(u,v)^{3}.
\]
Like as in the case of the Euclidean $3$-space,
one can easily verify that
$a_{20}$, $a_{02}$ and $a_{11}$ are all intrinsic invariants:
In fact, we set
\[
   [\vect{a},\vect{b} ,\vect{c}]:=\Omega(\vect{a},\vect{b} ,\vect{c})
        =g(\vect{a}\times \vect{b},\vect{c}),
\]
where $\Omega$ is the Riemannian volume form of $(N^3,g)$
and $\vect{a},\vect{b},\vect{c}$ are vector fields
of $N^3$ along the $C^\infty$-map $f$.
We denote by $D$ the Levi-Civita connection of $g$. 
By replacing
\[
    f_{uu}\mapsto D_{u}f_u,\quad
    f_{uv}\mapsto D_{u}f_v,\quad
    f_{vv}\mapsto D_{v}f_v,
\]
the three formulas \eqref{eq:a02b}, \eqref{eq:a20}
and \eqref{eq:a11} hold at the cross cap
singularity of $N^3$.

By a straightforward calculation,
the first and the second fundamental forms
\[
  E\, du^2+2F\,du\,dv+G\,dv^2, \quad
  L\, du^2+2M\,du\,dv+N\,dv^2 
\]
have the following expressions
\begin{align*}
   &E=1+r^2 \bigl(
               \sin^2\theta+(a_{20}\cos \theta+a_{11}\sin \theta)^2
               +O(r)\bigr), \\
   &F=r^2 \left(a_{20}a_{11}\cos^2\theta+
          (1+a_{20}a_{02}+a_{11}^2)\sin\theta\cos \theta
      +a_{11}a_{02}\sin^2 \theta+O(r)\right), \\
   &G=r^2\bigl(A^2_\theta+O(r)\bigr), \\
   &L=\frac{a_{20}\cos \theta}{A_\theta}+O(r),\quad
    M=-\frac{a_{02}\sin \theta}{A_\theta}+O(r),\quad
    N=\frac{a_{02}\cos \theta}{A_\theta}+O(r),
\end{align*}
where $u=r \cos\theta$, $v=r\sin \theta$ and
\[
   A_\theta:=\sqrt{\cos^2\theta+(a_{11}\cos\theta+a_{02}\sin\theta)^2}.
\]
We denote by $K_{\ext}$ the determinant of the
shape operator of $f$, which is called the 
{\em extrinsic curvature function}.

Since $EG-F^2=r^2(A^2_\theta+O(r))$, the mean curvature
function $H$,
the extrinsic curvature function $K_{\ext}$,
and the Gaussian curvature function $K$
are given by
\begin{align}
   H&=\frac{1}{r^2}
      \left(\frac{a_{02}\cos \theta}{2A_{\theta}^{3}}+
             O(r)
      \right),
      \label{eq:H-N}
    \\
   K_{\ext}&
     =\frac{a_{02}}{r^2A_\theta^4}\left(a_{20}\cos^2\theta-a_{02}\sin^2
            \theta+O(r)\right),
    \label{eq:K-N}\\
   K&=K_{\ext}+c_{g}(r,\theta)=
      \frac{a_{02}}{r^2A_\theta^4}\left(a_{20}\cos^2\theta-a_{02}\sin^2
     \theta+O(r)\right),
\label{eq:Kext-N}
\end{align}
where $c_{g}(r,\theta)$ is 
a suitable $C^\infty$-function at $p$
with respect to the sectional
curvature of the Riemannian metric $g$
appeared in the Gauss equation.
When $(N^3,g)$ is the Euclidean space, 
the formulas \eqref{eq:H-N} and \eqref{eq:K-N}
(and also the description of principal curvatures) have been
given in \cite{FH}. 
It should be remarked that
the top terms of the curvature functions $K$ and $H$ are determined
by the three invariants $a_{20}$, $a_{02}$ and $a_{11}$.
This fact seems a remarkable property of cross caps since 
{\it the top terms of $H$ and $K$ do not depend on a choice 
     of ambient spaces}.
Moreover, \eqref{eq:K-N} and \eqref{eq:Kext-N} imply that
the asymptotic behaviors of $K$ and $K_{\ext}$ are same at the cross cap
singularity.

The following is a generalization of the assertion
proved in Fukui-Ballesteros \cite{FB}
and  Tari \cite{T} when $(N^3,g)$ is the  Euclidean $3$-space:
\begin{proposition}
 Umbilical points do not accumulate to a cross cap
 in $(N^3,g)$.
\end{proposition}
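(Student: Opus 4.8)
The plan is to reduce the claim to an analysis of where umbilic points can occur near the cross cap singularity, using the asymptotic expansions of the fundamental forms already computed in the excerpt. A point is umbilic precisely when the second fundamental form is proportional to the first, equivalently when $H^2 - K_{\ext} = 0$, i.e.\ when the two principal curvatures coincide. Since $H$ blows up like $r^{-2}$ while the principal curvatures are themselves unbounded near the singularity, the naive quantity $H^2-K_{\ext}$ is not directly convenient; instead I would work with the discriminant of the shape operator written in terms of the coefficients $L,M,N$ and $E,F,G$, namely the quantity
\[
  \Delta_{\mathrm{umb}}:=(EN-GL)^2 - 4(EM-FL)(FN-GM),
\]
whose vanishing characterizes umbilics. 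The goal is to show that $\Delta_{\mathrm{umb}}$, after clearing the common powers of $r$, has a non-vanishing leading term along every direction $\theta$ as $r\to 0$, so that no sequence of umbilics can converge to $p$.

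First I would substitute the expansions of $E,F,G,L,M,N$ from the displayed formulas into $\Delta_{\mathrm{umb}}$ and extract the lowest-order term in $r$. Because $L,M,N$ are $O(1)$ while $E=1+O(r^2)$, $F=O(r^2)$, $G=r^2(A_\theta^2+O(r))$, the dominant contributions come from keeping $E\approx 1$, $F\approx 0$, and $G\approx r^2 A_\theta^2$, giving combinations such as $EN-GL\approx N$, $EM-FL\approx M$, and $FN-GM\approx -r^2 A_\theta^2 M$ at leading order. Collecting these, the leading behaviour of $\Delta_{\mathrm{umb}}$ is governed by $N^2 + 4r^2 A_\theta^2 M^2$ to lowest order, which upon inserting $M=-a_{02}\sin\theta/A_\theta+O(r)$ and $N=a_{02}\cos\theta/A_\theta+O(r)$ yields a leading coefficient proportional to $a_{02}^2(\cos^2\theta + 4r^2\sin^2\theta)/A_\theta^2$. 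The essential point is that since $a_{02}>0$, this leading coefficient can vanish only when $\cos\theta=0$ and simultaneously $r\to 0$, so any accumulation of umbilics would have to occur along the single direction $\theta=\pm\pi/2$, i.e.\ tangent to the degenerate direction $\partial/\partial v$.

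The hard part will be ruling out umbilics accumulating along precisely this exceptional direction $\theta=\pm\pi/2$, where the leading term above degenerates; there one must carry the expansions to the next order in $r$ and verify that the next nonzero coefficient of $\Delta_{\mathrm{umb}}$ is again nonzero, again using $a_{02}>0$ and the explicit $O(r)$ terms of $L,M,N$. A cleaner way to organize this is to rescale: set $\theta=\pm\pi/2+\phi$ and examine $\Delta_{\mathrm{umb}}$ as a function of $(r,\phi)$ near $(0,0)$, showing that its zero set is empty in a punctured neighbourhood. Once this is established for the model cross cap with ambient metric $g$, the result transfers to $(N^3,g)$ because the Gauss-equation correction term $c_g(r,\theta)$ is a smooth bounded function and therefore does not affect the leading-order $r^{-2}$ structure that forces $\Delta_{\mathrm{umb}}$ away from zero; indeed the extrinsic data $L,M,N$ already incorporate the ambient connection via the replacement $f_{uu}\mapsto D_uf_u$ etc. I would conclude that there is a punctured neighbourhood of $p$ containing no umbilic points, which is exactly the statement that umbilics cannot accumulate to the cross cap.
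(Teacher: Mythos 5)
Your reduction to the discriminant $\Delta_{\mathrm{umb}}=(EN-GL)^2-4(EM-FL)(FN-GM)$ is legitimate (it equals $4(EG-F^2)^2(H^2-K_{\ext})$), and away from the degenerate direction your computation reproduces the paper's argument: your leading term $a_{02}^2\cos^2\theta/A_\theta^2$ for $\Delta_{\mathrm{umb}}$ is exactly the paper's statement that $H^2-K_{\ext}=r^{-4}\bigl((a_{02}\cos\theta)^2/(4A_\theta^6)+O(r)\bigr)$ stays away from zero (indeed diverges) when $\cos\theta\ne0$. The genuine gap is the case you yourself flag as ``the hard part,'' namely $\theta=\pm\pi/2$, and your plan there --- carry the expansions to the next order in $r$ --- does not close. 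First, it requires the $O(r)$ terms of $L$, $M$, $N$, which the paper does not record and you never compute. Second, and more fundamentally, order-counting cannot settle the critical regime $\cos\theta=O(r)$: writing $\epsilon=\cos\theta$, the unknown $O(r)$ correction to $N$ produces a sign-indefinite cross term of size $O(\epsilon r)$ in $\Delta_{\mathrm{umb}}$, so what you would have to verify is that a quadratic form $c_1\epsilon^2+c_2\epsilon r+c_3 r^2$ with an uncontrolled coefficient $c_2$ is positive definite --- a conclusion that no amount of further expansion of the same kind can force. (Your intermediate bookkeeping also slips here: $FN$ is of the same order $r^2$ as $GM$, and the claimed leading coefficient $a_{02}^2(\cos^2\theta+4r^2\sin^2\theta)/A_\theta^2$ mixes orders, since the $O(\epsilon r)$ terms just mentioned dominate $4r^2\sin^2\theta$.)

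The idea missing from your proposal --- and the entire content of the paper's proof at the degenerate direction --- is a sign argument, not a finer expansion: at an umbilical point the two principal curvatures coincide, so $K_{\ext}=H^2\ge 0$; but \eqref{eq:K-N} together with $a_{02}>0$ gives $K_{\ext}<0$ for all small $r$ and all $\theta$ near $\pm\pi/2$, so umbilics simply cannot occur near that direction, with no higher-order data needed. The same observation repairs your argument in your own notation: since $(EN-GL)^2\ge 0$, one has $\Delta_{\mathrm{umb}}\ge -4(EM-FL)(FN-GM)$, and near $\theta=\pm\pi/2$ the displayed leading terms already give $-4(EM-FL)(FN-GM)=4r^2\bigl(a_{02}^2+o(1)\bigr)>0$. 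With this one inequality your proof becomes complete; without it, the exceptional direction remains open. (A final minor point: your closing paragraph about transferring the result from a ``model'' to $(N^3,g)$ is unnecessary --- the paper's expansions of $E,\dots,N$ are already computed in the general ambient space, and umbilicity concerns only the shape operator, so the Gauss-equation term $c_g$ never enters.)
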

\begin{proof}
 By \eqref{eq:K-N},
 we know that $K_{\ext}<0$ if $\theta=\pm \pi/2$.
 Thus it is sufficient to show that $H^2-K_{\ext}$ does not vanish
 under the assumption $\cos \theta\ne 0$.
 In fact
 \[
    H^2-K_{\ext}=
    \frac1{r^4}\left(\frac{(a_{02}\cos \theta)^2}{4A_{\theta}^{6}}+O(r)\right)
 \]
diverges if $\cos \theta\ne 0$ as $r$ tends to zero.
\end{proof}

As noted in the introduction, the ellipticity and hyperbolicity of
cross caps are determined by the sign of the invariant $a_{20}$.
In this paper, we have shown the 
existence of non-trivial isometric deformations of
quadratic cross caps when $a_{20}$ vanishes.

\red{
\section{Invariants of cross caps under isometric deformations}
\label{sec:deform2}
It was classically known that 
regular surfaces (not only ruled surfaces) 
admit non-trivial isometric deformations in general,
and such deformations can be expected even at cross cap singularities.
In this section, we shall give further invariants
under isometric deformation of cross caps.
The following assertion holds:
\begin{proposition}
 The four \red{quantities} written in terms of coefficients
 of the normal form as in \eqref{eq:cross}
 \begin{align}\label{eq:a}
  a_{03} + \frac{3a_{11} b_3}2,\quad
  a_{12} + \frac{(1 + a_{11}^2) b_3}{2a_{02}} ,\\
\label{eq:b}
  a_{21} - \frac{a_{11} a_{20} b_3}{6a_{02}},\quad
  a_{30} - \frac{(1 + a_{11}^2) a_{20} b_3}{2a_{02}^2} 
 \end{align}
 are common for two cross caps having the same 
 first fundamental form.
 In particular, they do not change
 under isometric deformations of cross caps.
\end{proposition}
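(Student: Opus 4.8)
The plan is to reduce the statement to an explicit description of how the third-order invariants of a cross cap change when the embedding is perturbed while the induced metric is held fixed. As a first step I would record the third-order Taylor jet of the first fundamental form of a general cross cap written in its canonical coordinate system \eqref{eq:cross}. Writing $f=(u,\,uv+\beta(v),\,\phi(u,v))$, with $\beta(v)=\tfrac{b_3}{6}v^3+O(v^4)$ and $\phi$ the polynomial with coefficients $a_{jk}$, a direct expansion of $E=f_u\cdot f_u$, $F=f_u\cdot f_v$, $G=f_v\cdot f_v$ gives a quadratic part governed by $a_{20},a_{11},a_{02}$ (recovering the intrinsic invariants of Theorem~\ref{thm:intrinsic}) together with twelve cubic coefficients, each linear in the five third-order invariants $a_{30},a_{21},a_{12},a_{03},b_3$ and in the already-intrinsic quantities $a_{20},a_{11},a_{02}$.

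Next I would compare two isometric cross caps $f$ and $\tilde f$. By definition they share $E,F,G$ in a common admissible coordinate system, but that system is canonical for at most one of them, so the cubic coefficients cannot be matched directly. Instead I would introduce the origin-preserving coordinate change $\psi$ carrying $\tilde f$ into its own canonical coordinates, so that $\psi^{*}(ds^2_{(\tilde a,\tilde b)})=ds^2_{(a,b)}$, where $ds^2_{(a,b)}$ denotes the normal-form metric attached to a prescribed list of invariants. The task is then to pin down which coordinate changes send one normal-form metric to another: matching the normalization $E(0,0)=1$ together with the degenerate direction $\partial/\partial v$ fixes the linear part of $\psi$, and matching the cubic coefficients determines $\psi$ up to a one-parameter family, with $b_3$ playing the role of the free parameter. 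Computing the resulting action on $(a_{30},a_{21},a_{12},a_{03},b_3)$ is expected to produce a transformation law of the form
\[
 \delta a_{03}=-\tfrac{3a_{11}}{2}\,\delta b_3,\quad
 \delta a_{12}=-\tfrac{1+a_{11}^2}{2a_{02}}\,\delta b_3,\quad
 \delta a_{21}=\tfrac{a_{11}a_{20}}{6a_{02}}\,\delta b_3,\quad
 \delta a_{30}=\tfrac{(1+a_{11}^2)a_{20}}{2a_{02}^2}\,\delta b_3 ,
\]
while $a_{20},a_{11},a_{02}$ stay fixed. Granting this law, the four quantities in \eqref{eq:a} and \eqref{eq:b} are manifestly annihilated, hence depend only on the metric jet and agree for $f$ and $\tilde f$; applied to a smooth family this yields invariance under isometric deformations. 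As a consistency check I would specialize to $a_{20}=0$ and confirm that the law reproduces the formulas \eqref{eq:extinv} of Theorem~\ref{thm:extrinsic}, where the last two corrections drop out and all four combinations vanish identically.

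The main obstacle I anticipate is precisely the middle step: determining the admissible coordinate changes that preserve the normal-form structure and computing their simultaneous third-order action on all five invariants. This is where the non-canonical nature of the shared coordinate system must be handled with care, where the one-dimensionality of the residual freedom (matching the single parameter $\kappa(0)$ of Theorem~\ref{thm:extrinsic}) has to be established, and where the lower-order quantities $a_{20},a_{11},a_{02}$ enter the cubic matching and must be tracked throughout. Once the transformation law is in hand, the verification that the four listed combinations are invariant is a purely algebraic cancellation.
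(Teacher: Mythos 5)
Your proposal follows essentially the same route as the paper's proof: the paper likewise takes the two normal forms, passes to the transition diffeomorphism between their canonical coordinate systems (normalizing its linear part to the identity and using Theorem~\ref{thm:intrinsic} to equate $a_{02},a_{11},a_{20}$), and matches Taylor coefficients of the shared first fundamental form order by order, finding that $x$ has vanishing second- and third-order derivatives at the origin and that the differences $a_{03}-A_{03}$, $a_{12}-A_{12}$, $a_{21}-A_{21}$, $a_{30}-A_{30}$ are expressed in terms of $a_{02},a_{11},a_{20}$ and the single free parameter $b_3-B_3$. This is exactly the one-parameter transformation law you anticipate, with precisely the coefficients needed to annihilate the four combinations in \eqref{eq:a} and \eqref{eq:b}.
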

\begin{proof}
 Let $f_0$ and $f_1$ be two cross caps having the following normal
 forms respectively;
 \begin{align}
    f_0(u,v)&=
        \left(
	   u,
	   uv+\frac{b_3}{3!}v^3,
	   \sum_{r=2}^3 \sum_{j=0}^r \frac{a_{j\,r-j}}{j!(r-j)!}u^j
	   v^{r-j}
	\right) +O(u,v)^{4}, \\
    f_1(x,y)&=
        \left(
	   x,
	   xy+\frac{B_3}{3!}y^3,
	   \sum_{r=2}^3 \sum_{j=0}^r \frac{A_{j\,r-j}}{j!(r-j)!}x^j
	   y^{r-j}
	\right) +\red{O(x,y)^{4}}.
 \end{align}
 Since $(x,y)$ and $(u,v)$ are both local coordinate systems of
 $\R^2$, the mapping $(u,v)\mapsto (x(u,v),y(u,v))$
 is a diffeomorphism.
 Since two coordinates give a normal form of
 $f_0$ and $f_1$, we may set
 \[
       x_u(0,0)=y_v(0,0)=1,\qquad x_v(0,0)=y_u(0,0)=0.
 \]
 Suppose that $f_0$ and $f_1$ share the same first fundamental form.
 Since we have seen that $a_{02}$, $a_{20}$ and $a_{11}$ are
 intrinsic (by Theorem~\ref{thm:intrinsic}), we have that
 \[
    a_{20}=A_{20},\qquad a_{11}=A_{11},\qquad a_{02}=A_{02}.
 \]
 Moreover, it holds that
 \begin{align}\label{eq:e1}
  E_0&=E_1(x_u)^2+2F_1 x_uy_u+G_1 (y_u)^2, \\
  \label{eq:f1}
  F_0&=E_1 x_ux_v+F_1 (x_uy_v+x_vy_u)+G_1 y_uy_v, \\
  \label{eq:g1}
  G_0&=E_1(x_v)^2+2F_1 x_vy_v+G_1 (y_v)^2, 
 \end{align}
 where
 \begin{equation}
   E_i:=(f_i)_u\cdot (f_i)_u,\quad
   F_i:=(f_i)_u\cdot (f_i)_v,\quad
   G_i:=(f_i)_v\cdot (f_i)_v \qquad (i=0,1).
 \end{equation}
 Computing the first and second order terms
 of the Taylor expansions of the left and right-hand sides 
 of \eqref{eq:e1}, \eqref{eq:f1} and \eqref{eq:g1},
 we get the following relations:
 \begin{gather*}
     x_{uu}(0,0)=x_{uv}(0,0)=x_{vv}(0,0)=0,\\
     x_{uuu}(0,0)=x_{uuv}(0,0)=x_{uvv}(0,0)=x_{vvv}(0,0)=0.
 \end{gather*}
 Similarly, computing the Taylor expansions of
 the third order derivatives
 \[
    \partial^3/\partial u^3,\quad
    \partial^3/\partial u^2\partial v,\quad
    \partial^3/\partial v \partial u^2,\quad
    \partial^3/\partial v^3,
 \]
one gets explicit expressions for 
$y_{uu}(0,0)$, $y_{uv}(0,0)$, $y_{vv}(0,0)$, and
 \[
  a_{03}-A_{03},\quad a_{12}-A_{12},\quad a_{21}-A_{21},
   \quad a_{30}-A_{30}
 \]
 in terms of $a_{02}$, $a_{11}$, $a_{20}$ and $b_3-B_3$,
 which proves the assertion.
\end{proof}
\begin{remark}
 The above conclusion implies that $a_{30}$ and $a_{21}$ do not change 
 under isometric deformations when $a_{20}=0$.
 Moreover, \eqref{eq:a} and \eqref{eq:b} imply that
 \begin{equation}\label{eq:relation}
   2 a_{03}+3a_{11}b_3=0,\qquad
   2 a_{02}a_{12}+(1+a_{11}^2)b_3=0
 \end{equation}
 hold for non-trivial isometric deformations of quadratic
 cross caps even when $a_{20}\ne 0$.
 The relations \eqref{eq:relation}
 also follow from \eqref{eq:extinv} in the case $a_{20}=0$.
\end{remark}
One can continue the same calculation for the fourth order terms. 
The authors checked using Mathematica that
\[
  a_{04}-A_{04},\quad
  a_{13}-A_{13},\quad
  a_{22}-A_{22},\quad
  a_{31}-A_{31},\quad
  a_{40}-A_{40}
\]
can be written in terms of
$a_{02}$, $a_{11}$, $a_{20}$, \red{$b_3$}, $b_3-B_3$ and $b_4-B_4$.
Using this, it can be observed that
$a_{ij}=A_{ij}$ ($2\le i+j\le 4$)
hold if $b_3=B_3$ and $b_4=B_4$.
}

\section*{Acknowledgments}
The authors thank Shyuichi Izumiya, Toshizumi Fukui and Wayne Rossman 
for valuable comments. \red{The fourth author thanks 
Huili Liu for fruitful discussions on this subject at 8th 
Geometry Conference for Friendship of China and Japan
at Chengdu.}

\end{document}